\definecolor{mred}{rgb}{0.6, 0, 0}
\definecolor{mgreen}{rgb}{0, 0.5, 0}
\definecolor{mblue}{rgb}{0, 0, 0.5}
\definecolor{mcyan}{rgb}{0, 0.5, 0.5}
\def\NN{\mathbb{N}}
\def\RR{\mathbb{R}}
\def\EE{\mathbb{E}}
\def\PP{\mathbb{P}}
\def\F{\mathscr{F}}
\def\U{\mathscr{U}}
\def\D{\mathscr{D}}
\def\x0{x_0}
\newcommand{\mb}{\mathbb}
\newcommand{\mc}{\mathcal}
\newcommand{\mrm}{\mathrm}
\newcommand{\inprod}[2]{\left\langle{#1},{#2}\right\rangle}
\newcommand{\norm}[1]{\left\lVert{#1}\right\rVert}
\newcommand{\epower}[1]{\mathrm e^{#1}}
\newcommand{\eps}{\varepsilon}
\newcommand{\indic}[1]{\mathbf 1_{#1}}
\DeclareMathOperator{\erf}{erf}
\DeclareMathOperator{\erfc}{erfc}
\newtheorem{proposition}{Proposition}
\newtheorem{lemma}[proposition]{Lemma}
\newtheorem{assumption}{Assumption}
\newtheorem{corollary}[proposition]{Corollary}
\theoremstyle{remark}
\newtheorem{example}[proposition]{Example}
\newtheorem{remark}[proposition]{Remark}
\newenvironment{proof}{\par \noindent \textit{Proof:}}{\hfill $\square$ \\ \par \noindent}
\begin{document}

\begin{frontmatter}

\title{On convex problems in chance-constrained stochastic model predictive control\thanksref{footnoteinfo}}
\thanks[footnoteinfo]{This paper was not presented at any IFAC
meeting. Research supported in part by the Swiss National Science
Foundation under grant 200021- 122072. Corresponding author
Eugenio Cinquemani. Tel. +41 (0)44 632 86 61; Fax +41 (0)44 632 12
11.}
\author[ETH]{Eugenio Cinquemani}\ead{cinquemani@control.ee.ethz.ch},
\author[STANFORD]{Mayank Agarwal}\ead{mayankag@stanford.edu},
\author[ETH]{Debasish Chatterjee}\ead{chatterjee@control.ee.ethz.ch},
\author[ETH]{John Lygeros}\ead{lygeros@control.ee.ethz.ch}

\address[ETH]{Automatic Control Laboratory, Physikstrasse 3, ETH Z\"urich, 8092 Z\"urich, Switzerland}
\address[STANFORD]{Department of Electrical Engineering, Stanford University, California, USA}

\begin{keyword}
Stochastic control; Convex optimization; Probabilistic constraints
\end{keyword}

\begin{abstract}
We investigate constrained optimal control problems for linear
stochastic dynamical systems evolving in discrete time. We
consider minimization of an expected value cost over a finite
horizon. Hard constraints are introduced first, and then
reformulated in terms of probabilistic constraints. It is shown
that, for a suitable parametrization of the control policy, a wide
class of the resulting optimization problems are convex, or admit
reasonable convex approximations.
\end{abstract}

\end{frontmatter}

\section{Introduction}
This work stems from the attempt to address the optimal
infinite-horizon constrained control of discrete-time stochastic
processes by a model predictive control
strategy~\cite{ECC,BoundedControl,HitTime,ReachAvoid,kouvaritakissMPCIneqconstraints,vanHessemFullSolution,aldenRH,batinaPhDthesis,bertsekas}.
We focus on linear dynamical systems driven by stochastic noise
and a control input, and consider the problem of finding a control
policy that minimizes an expected cost function while
simultaneously fulfilling constraints on the control input and on
the state evolution. In general, no control policy exists that
guarantees satisfaction of deterministic (hard) constraints over
the whole infinite horizon. One way to cope with this issue is to
relax the constraints in terms of probabilistic (soft)
constraints~\cite{Stanfordgang,primbs}. This amounts to requiring
that constraints will not be violated with sufficiently large
probability or, alternatively, that an expected reward for the
fulfillment of the constraints is kept sufficiently large.

Two considerations lead to the reformulation of an infinite
horizon problem in terms of subproblems of finite horizon length.
First, given any bounded set (e.g. a safe set), the state of a
linear stochastic dynamical system is guaranteed to exit the set
at some time in the future with probability one whatever the
control policy. Therefore, soft constraints may turn the original
(infeasible) hard-constrained optimization problem into a feasible
problem only if the horizon length is finite. Second, even if the
constraints are reformulated so that an admissible
infinite-horizon policy exists, the computation of such a policy
is generally intractable. The aim of this note is to show that,
for certain parameterizations of the policy
space~\cite{Lofberg:03,ref:ben-tal,goulart} and the constraints,
the resulting finite horizon optimization problem is tractable.

An approach to infinite horizon constrained control problems that
has proved successful in many applications is model predictive
control~\cite{maciejowski}. In model predictive control, at every
time $t$, a finite-horizon approximation of the infinite-horizon
problem is solved but only the first control of the resulting
policy is implemented. At the next time $t+1$, a measurement of
the state is taken, a new finite-horizon problem is formulated,
the control policy is updated, and the process is repeated in a
receding horizon fashion. Under time-invariance assumptions, the
finite-horizon optimal control problem is the same at all times,
giving rise to a stationary optimal control policy that can be
computed offline.

Motivated by the previous considerations, here we study the
convexity of certain stochastic finite-horizon control problems
with soft constraints. Convexity is central for the fast
computation of the solution by way of numerical procedures, hence
convex formulations~\cite{bertsimas2007} or convex
approximations~\cite{nemshap,bertsim} of the stochastic control
problems are commonly sought. However, for many of the classes of
problems considered here, tight convex approximations are usually
difficult to derive. One may argue that non-convex problems can be
tackled by randomized
algorithms~\cite{spallbook,tempoetal,sagarpaper}. However,
randomized solutions are typically time-consuming and can only
provide probabilistic guarantees. In particular, this is critical
in the case where the system dynamics or the problem constraints
are time-varying, since in that case optimization must be
performed in real-time.

Here we provide conditions for the convexity of chance constrained
stochastic optimal control problems. We derive and compare several
explicit convex approximations of chance constraints for Gaussian
noise processes and for polytopic and ellipsoidal constraint
functions. Finally, we establish conditions for the convexity of a
class of expectation-type constrains that includes standard
integrated chance constraints~\cite{haneveldICC,haneveldICC06} as
a special case. For integrated chance constrains on Gaussian
processes with polytopic constraint functions, an explicit
formulation of the optimization problem is also derived.

The optimal constrained control problem we concentrate on is
formulated in Section~\ref{sec:ps}. A convenient parametrization
of the control policies and the convexity of the objective
function are discussed at this stage.
Next, two probabilistic formulations of the constraints and
conditions for the convexity of the space of admissible control
policies are discussed: Section~\ref{sec:cc} is dedicated to
chance constraints, while Section~\ref{sec:icc} is dedicated to
integrated chance constraints. In~Section~\ref{sec:num}, numerical
simulations are reported to illustrate and discuss the results of
the paper.

\section{Problem statement}
\label{sec:ps}

Let $\NN=\{1,2,\ldots\}$ and $\NN_0\triangleq \NN \cup \{0\}$.
Consider the following dynamical model: for $t\in\NN_0$,
\begin{equation}
\label{eq:system} x(t+1)=A x(t)+ Bu(t)+ w(t)\end{equation} where
$x(t)\in\RR^n$ is the state, $u(t)\in \RR^m$ is the control input,
$A\in\RR^{n\times n}$, $B\in\RR^{n\times m}$, and $w(t)$ is a
stochastic noise input defined on an underlying probability space
$(\Omega, \mathfrak{F}, \PP)$. No assumption on the probability
distribution of the process $w$ is made at this stage. We assume
that at any time $t\in\NN_0$, $x(t)$ is observed exactly and that,
for given $x_0\in\RR^n$, $x(0)=x_0$.
\par Fix a horizon length $N\in\NN$. The evolution of the system
from $t=0$ through $t=N$ can be described in compact form as
follows:
\begin{equation}
\label{eq:compactdyn}
\bar{x}=\bar{A}\x0+\bar{B}\bar{u}+\bar{D}\bar{w},
\end{equation}
where
$$\bar{x}\triangleq\begin{bmatrix}
x(0) \\ x(1)  \\ \vdots \\ x(N)
\end{bmatrix},\qquad \bar{u}\triangleq \begin{bmatrix}
u(0) \\ u(1) \\ \vdots \\ u(N-1)
\end{bmatrix},\qquad
\bar{w}\triangleq
\begin{bmatrix}
w(0) \\ w(1) \\ \vdots \\ w(N-1)
\end{bmatrix},\qquad \bar{A}\triangleq
\begin{bmatrix}
I_n \\ A \\ \vdots \\ A^N
\end{bmatrix},$$
$$
\bar{B}\triangleq
\begin{bmatrix}
0_{n\times m} &\cdots &\cdots & 0_{n\times m} \\
B &\ddots &&\vdots\\
AB & B &\ddots & \vdots\\
\vdots && \ddots &0_{n\times m}\\
A^{N-1} B & \cdots & AB& B
\end{bmatrix}, \qquad
\bar{D}\triangleq
\begin{bmatrix}
0_{n} &  \cdots &\cdots& 0_{n} \\
I_n & \ddots & &\vdots \\
A & I_n &\ddots &\vdots \\
\vdots && \ddots & 0_n \\
A^{N-1} &\cdots & A& I_n
\end{bmatrix}.
$$
Let $V:\RR^{(N+1)n\times Nm}\to\RR$ and $\eta:\RR^{(N+1)n\times
Nm}\to \RR^{r}$, with $r\in \NN$, be measurable functions.
We are interested in constrained optimization
problems of the following kind:
\begin{equation}
\label{eq:problem}
\begin{aligned}
\inf_{\bar{u}\in \U}\quad&\EE[V(\bar{x},\bar{u})] \\
\textrm{subject to}\quad&\textrm{(\ref{eq:compactdyn})}\quad\textrm{and}\quad
\eta(\bar{x},\bar{u})\leq 0
\end{aligned}
\end{equation}
where the expectation $\EE[\cdot]$ is defined in terms of the
underlying probability space $(\Omega, \mathfrak{F}, \PP)$, $\U$
is a class of causal deterministic state-feedback control policies
and the inequality in~(\ref{eq:problem}) is interpreted
componentwise.
\begin{example}\rm
\label{thm:examples}
In the (unconstrained) linear stochastic
control problem~\cite{astrom}, $w$ is Gaussian white noise and the
aim is to minimize
$$\EE\left[\sum_{t=0}^{N-1} \left(x^T(t)Q(t)x(t)+u^T(t)R(t)u(t)\right)+x^T(N)Q(N)x(N)\right],$$
where the matrices $Q(t)\in\RR^{n\times n}$ and
$R(t)\in\RR^{m\times m}$ are positive definite for all $t$, with
respect to causal feedback policies subject to the system
dynamics~(\ref{eq:system}). This problem fits easily in our
framework; it suffices to define
\begin{equation}
\label{eq:quadratic}
V(\bar{x},\bar{u})=\begin{bmatrix}\bar{x}^T & \bar{u}^T
\end{bmatrix}
M
\begin{bmatrix}\bar{x} \\ \bar{u}
\end{bmatrix},
\end{equation}
with
$M=\textrm{diag}\big(Q(0),Q(1),\ldots,Q(N),R(0),R(1),\ldots,
R(N-1)\big)>0$ (the notation $M>0$ indicates that $M$ is a
positive definite matrix). In our framework, though, the input
noise sequence may have an arbitrary correlation structure, the
cost function may be non-quadratic and, most importantly,
constraints may be present.
\par Standard constraints on the state and the input are also
formulated easily. For instance, sequential ellipsoidal
constraints of the type
\begin{align*}
\begin{bmatrix} x^T(t) & u^T(t)\end{bmatrix}S(t)
\begin{bmatrix}x(t) \\ u(t)\end{bmatrix}&\leq 1,\qquad t=0,1,\ldots,
N-1, \\
x^T(N)S(N) x(N)&\leq 1,
\end{align*}
with $0<S(t)\in\RR^{(n+m)\times (n+m)}$ for $t=0,1,\ldots, N-1$
and $0<S(N)\in\RR^{n\times n}$, are captured by the definition
$$\eta(\bar{x},\bar{u})=\begin{bmatrix}\eta_0(\bar{x},\bar{u}) & \eta_1(\bar{x},\bar{u}) & \cdots & \eta_N(\bar{x}) \end{bmatrix}^T$$
where, for $t=0,1,\ldots, N$,
$$\eta_t(\bar{x},\bar{u})=\begin{bmatrix}\bar{x}^T &
\bar{u}^T
\end{bmatrix}
\Xi_t
\begin{bmatrix}\bar{x} \\ \bar{u}
\end{bmatrix}-1,
$$
and each matrix $\Xi_t$ is immediately constructed in terms of the
$S(t)$. Our framework additionally allows for cross-constraints
between states and inputs at different times.
\end{example}

\subsection{Feedback from the noise input} \label{sec:if}

By the hypothesis that the state is observed without error, one
may reconstruct the noise sequence from the sequence of observed
states and inputs by the formula
\begin{equation}
\label{eq:noiserec}
w(t)=x(t+1)-A x(t) -B u(t),\qquad t\in\NN_0.
\end{equation}
In light of this, and
following~\cite{Lofberg:03,goulart,ref:ben-tal}, we shall consider
policies of the form:
\begin{equation}
\label{eq:controlpolicy}
u(t)=\sum_{i=0}^{t-1} G_{t,i} w(i) + d_t,
\end{equation}
where the feedback gains $G_{t,i}\in\RR^{m\times n}$ and the
affine terms $d_t\in\RR^m$ must be chosen based on the control
objective. With this definition, the value of $u$ at time $t$
depends on the values of $w$ up to time $t-1$.
Using~(\ref{eq:noiserec}) we see that $u(t)$ is a function of the
observed states up to time $t$. It was shown in~\cite{goulart}
that there exists a (nonlinear) bijection between control policies
in the form~(\ref{eq:controlpolicy}) and the class of affine state
feedback policies. That is, provided one is interested in affine
state feedback policies, parametrization~(\ref{eq:noiserec})
constitutes no loss of generality. Of course, this choice is
generally suboptimal, since there is no reason to expect that the
optimal policy is affine, but it will ensure the tractability of a
large class of optimal control problems. In compact notation, the
control sequence up to time $N-1$ is given by
\begin{equation}
\label{e:ubardef}
\bar{u}=\bar{G}\bar{w}+\bar{d},
\end{equation}
where
\begin{equation}
\label{e:parstruc}
\bar{G}\triangleq
\begin{bmatrix}
0_{m\times n} \\
G_{1,0} &0_{m\times n} \\
\vdots &\ddots &\ddots \\
G_{N-1,0} &\cdots &  G_{N-1,N-2} & 0_{m\times n}
\end{bmatrix},\qquad
\bar{d}\triangleq
\begin{bmatrix}
d_0 \\ d_1 \\ \vdots \\ d_{N-1}
\end{bmatrix};
\end{equation}
note the lower triangular structure of $\bar G$ that enforces
causality. The resulting closed-loop system dynamics can be
written compactly as the equality constraint
\begin{equation}
    \label{eq:cloopsys}
    \bar{x}=\bar{A}\x0+\bar{B}(\bar{G}\bar{w}+\bar{d})+\bar{D}\bar{w}.
\end{equation}

Let us denote the parameters of the control policy by $\theta=
(\bar G, \bar d)$ and write $(\bar x_\theta, \bar u_\theta)$ to
emphasize the dependence of $\bar{x}$ and $\bar{u}$ on $\theta$.
From now on we will consider the optimization problem
\begin{align}
\inf_{\theta\in \Theta}\quad&\EE[V(\bar{x}_\theta,\bar{u}_\theta)] \label{eq:objective} \\
\textrm{subject to}\quad&\eqref{e:ubardef},~\eqref{eq:cloopsys}\textrm{ and} \\
&\eta(\bar{x}_\theta,\bar{u}_\theta)\leq 0, \label{eq:constraint}
\end{align}
where $\Theta$ is the linear space of optimization parameters in
the form~\eqref{e:parstruc}.

\begin{remark}\rm
With the above parametrization of the control policy, both $\bar
u_\theta$ and $\bar x_\theta$ are affine functions of the
parameters $\theta$ (for fixed $\bar w$) and of the process noise
$\bar{w}$ (for fixed $\theta$). Most of the results developed
below rely essentially on this property. It was noticed
in~\cite{Ben-TaletAl} that a parametric causal feedback control
policy with the same property can be easily defined based on
indirect observations of the state, provided the measurement model
is linear. The method enables one to extend the results of this
paper to the case of linear output feedback. For the sake of
conciseness, this extension will not be pursued here.
\end{remark}

\subsection{Optimal control problem with relaxed constraints}

In general, no control policy can ensure that the
constraint~(\ref{eq:constraint}) is satisfied for all outcomes of
the stochastic input~$\bar{w}$. In the standard LQG setting, for
instance, any nontrivial constraint on the system state would be
violated with nonzero probability. We therefore consider relaxed
formulations of the constrained optimization
problem~\eqref{eq:objective}--\eqref{eq:constraint} of the form
\begin{align}
\inf_{\theta\in \Theta}\quad&\EE[V(\bar{x}_\theta,\bar{u}_\theta)] \label{eq:objective-2} \\
\textrm{subject to}\quad&\eqref{e:ubardef},~\eqref{eq:cloopsys}\textrm{ and}  \\
&\EE [\phi\circ \eta(\bar{x}_\theta,\bar{u}_\theta)] \leq 0, \label{eq:constraint-2}
\end{align}
where $\phi:\RR^r\to \RR^R$, with $R\in\NN$, is a convenient
measurable function and the inequality is again interpreted
componentwise. For appropriate choices of $\phi$, this formulation
embraces most common probabilistic constraint relaxations,
including chance constraints (see e.g.~\cite{nemshap}), integrated
chance constraints~\cite{haneveldICC,haneveldICC06}, and
expectation constraints~(see e.g.~\cite{primbs}). \par
We are
interested in the convexity of the optimization
problem~\eqref{eq:objective-2}--\eqref{eq:constraint-2}. First we
establish a general convexity result.
\begin{proposition}\label{thm:phietaconvex}
Let $(\Omega,\mathfrak{F},\PP)$ be a probability space, $\Theta$
be a convex subset of a vector space and $\D\subseteq \RR$ be
convex. Let $\gamma:\Omega\times\Theta\to \D$ and
$\varphi:\D\to\RR$ be measurable functions and define
$$J(\theta)\triangleq\EE[\varphi\circ\gamma(\omega,\theta)].$$
Assume that:
\begin{enumerate}[label=\emph{(\roman*)}, leftmargin=*, widest=iii, align=right]
\item \label{item:i} the mapping $\gamma(\omega,\cdot):\Theta
    \to \RR$ is convex for almost all $\omega\in\Omega$;
\item $\varphi$ is monotone nondecreasing and convex; \item
    $J(\theta)$ is finite for all $\theta\in\Theta$.
\end{enumerate}
Then the mapping $J:\Theta\to\RR$ is convex.
\end{proposition}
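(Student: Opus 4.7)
The plan is to reduce the statement to the one-dimensional fact that the composition of a convex nondecreasing function with a convex function is convex, and then to pull the resulting pointwise inequality through the expectation. This is essentially a Jensen-type chain of inequalities, so I do not expect any serious obstacle; the main thing to be careful about is that the manipulations happen \emph{almost surely} (since assumption (i) only holds for a.a.\ $\omega$) and that the integrals involved are well defined, which is exactly what (iii) buys us.

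Concretely, I would fix arbitrary $\theta_1,\theta_2\in\Theta$ and $\lambda\in[0,1]$, and set $\theta_\lambda\triangleq \lambda\theta_1+(1-\lambda)\theta_2\in\Theta$ (using convexity of $\Theta$). For every $\omega$ in a set of full $\PP$-measure, hypothesis (i) gives
\[
\gamma(\omega,\theta_\lambda)\leq \lambda\gamma(\omega,\theta_1)+(1-\lambda)\gamma(\omega,\theta_2),
\]
and since $\D$ is convex the right-hand side lies in $\D$, so applying $\varphi$ is legitimate. Monotonicity of $\varphi$ preserves the inequality, and then convexity of $\varphi$ yields
\[
\varphi\bigl(\gamma(\omega,\theta_\lambda)\bigr)\leq \lambda\,\varphi\bigl(\gamma(\omega,\theta_1)\bigr)+(1-\lambda)\,\varphi\bigl(\gamma(\omega,\theta_2)\bigr),
\]
valid $\PP$-almost surely.

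To conclude, I would take expectations on both sides. By hypothesis (iii) the integrals $J(\theta_1)$, $J(\theta_2)$, and $J(\theta_\lambda)$ are all finite, so monotonicity and linearity of $\EE[\cdot]$ give
\[
J(\theta_\lambda)\leq \lambda J(\theta_1)+(1-\lambda)J(\theta_2),
\]
which is exactly the convexity of $J$. The only step that deserves a brief mention in the write-up is the almost-sure clause: the exceptional null set in (i) may depend on the pair $(\theta_1,\theta_2)$, but this is harmless because the inequality is only used inside a single expectation at each fixed convex combination. Measurability of $\omega\mapsto \varphi\circ\gamma(\omega,\theta)$ follows from the measurability assumptions on $\gamma$ and $\varphi$, so no additional regularity argument is required.
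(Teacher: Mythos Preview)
Your proof is correct and follows essentially the same argument as the paper: establish the pointwise inequality $\varphi\circ\gamma(\omega,\theta_\lambda)\leq \lambda\,\varphi\circ\gamma(\omega,\theta_1)+(1-\lambda)\,\varphi\circ\gamma(\omega,\theta_2)$ almost surely by combining convexity of $\gamma(\omega,\cdot)$, monotonicity of $\varphi$, and convexity of $\varphi$, then integrate. Your write-up is in fact slightly more careful than the paper's in noting that convexity of $\D$ is what makes the intermediate application of $\varphi$ legitimate and in flagging the almost-sure clause, but the structure is identical.
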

\begin{proof}
Fix a generic $\omega\in\Omega$. Since $\gamma(\omega,\theta)$ is
convex in $\theta$ and $\varphi$ is monotone nondecreasing, for
any $\theta,\theta'\in\Theta$ and any $\alpha\in [0,1]$,
$$\varphi\big(\gamma(\omega,\alpha \theta+(1-\alpha)\theta')\big)\leq \varphi\big(\alpha \gamma(\omega,
\theta)+(1-\alpha)\gamma(\omega,\theta')\big).$$ Moreover, since
$\varphi$ is convex, $$\varphi\big(\alpha \gamma(\omega,
\theta)+(1-\alpha)\gamma(\omega,\theta')\big)\leq
\alpha\varphi\big( \gamma(\omega,
\theta)\big)+(1-\alpha)\varphi\big(\gamma(\omega,\theta')\big).$$
Since these inequalities hold for almost all $\omega\in\Omega$, it
follows that
\begin{align*}
\EE[\varphi\big(\gamma(\omega,\alpha
\theta+(1-\alpha)\theta')\big)] \leq &\,\, \EE[\alpha\varphi\big(
\gamma(\omega,
\theta)\big)+(1-\alpha)\varphi\big(\gamma(\omega,\theta')\big)] \\
=&\,\, \alpha\EE[\varphi\big( \gamma(\omega,
\theta)\big)]+(1-\alpha)\EE[\varphi\big(\gamma(\omega,\theta')\big)],
\end{align*} which proves the assertion.
\end{proof}%
Assumption~(iii) can be replaced by either of the following:
\begin{itemize}[leftmargin=0.35in, align=right]
\item[(iii$'$)] $J(\theta)\in \RR\cup\{+\infty\}$, $\forall
    \theta\in\Theta$.
\item[(iii$''$)] $J(\theta)\in \RR\cup\{-\infty\}$, $\forall
    \theta\in\Theta$.
\end{itemize}

Let us now make the following standing assumption.
\begin{assumption}
\label{a:convexobj} $V(\bar{x},\bar{u})$ is a convex function of
$(\bar{x},\bar{u})$ and $\EE[V(\bar{x}_\theta,\bar{u}_\theta)]$ is
finite for all $\theta\in\Theta$.
\end{assumption}
\begin{proposition}
\label{proposition:CostConvexity}
Under Assumption~\ref{a:convexobj},
$\EE[V(\bar{x}_\theta,\bar{u}_\theta)]$ is a convex function of
$\theta$.
\end{proposition}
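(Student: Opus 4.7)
The plan is to obtain this as an essentially immediate corollary of Proposition~\ref{thm:phietaconvex}, taking $\varphi$ to be the identity on $\RR$ (which is trivially monotone nondecreasing and convex) and taking $\gamma(\omega,\theta)\triangleq V(\bar x_\theta(\omega),\bar u_\theta(\omega))$. The parameter set $\Theta$ has already been declared as a linear space in~\eqref{e:parstruc}, so in particular it is a convex subset of a vector space, and $\D=\RR$ is convex, so the ambient hypotheses of the proposition are satisfied.

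The one piece of actual verification is hypothesis~\ref{item:i}: for almost every $\omega\in\Omega$, the mapping $\theta\mapsto V(\bar x_\theta(\omega),\bar u_\theta(\omega))$ must be convex. First I would observe that, for any fixed realization of $\bar w$, equation~\eqref{e:ubardef} makes $\bar u_\theta=\bar G\bar w+\bar d$ an affine function of $\theta=(\bar G,\bar d)$, and then the closed-loop dynamics~\eqref{eq:cloopsys} express $\bar x_\theta=\bar A x_0+\bar B(\bar G\bar w+\bar d)+\bar D\bar w$ as an affine function of $\theta$ as well. Consequently $\theta\mapsto(\bar x_\theta(\omega),\bar u_\theta(\omega))$ is an affine map for every fixed $\omega$, and since $V$ is convex in $(\bar x,\bar u)$ by Assumption~\ref{a:convexobj}, the composition $V(\bar x_\theta(\omega),\bar u_\theta(\omega))$ is convex in $\theta$ for every $\omega$ (and a fortiori for almost every $\omega$). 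Hypothesis~(iii) of Proposition~\ref{thm:phietaconvex} is precisely the finiteness clause in Assumption~\ref{a:convexobj}, so it holds verbatim.

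With all three hypotheses of Proposition~\ref{thm:phietaconvex} in force, the proposition yields convexity of $J(\theta)=\EE[\varphi\circ\gamma(\omega,\theta)]=\EE[V(\bar x_\theta,\bar u_\theta)]$ in $\theta$, which is the claim. There is no real obstacle here: the only thing that might deserve an explicit line in the write-up is the "composition of convex with affine is convex" step, together with pointing out that the affine parametrization is exactly what lets us push convexity in $(\bar x,\bar u)$ through to convexity in $\theta$.
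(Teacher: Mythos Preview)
Your proposal is correct and matches the paper's own proof essentially line for line: the paper also observes that $\Theta$ is a linear space, that $\theta\mapsto(\bar x_\theta(\omega),\bar u_\theta(\omega))$ is affine for each fixed $\omega$, composes this with the convex $V$, and then invokes Proposition~\ref{thm:phietaconvex} with $\gamma(\omega,\theta)=V(\bar x_\theta(\omega),\bar u_\theta(\omega))$ and $\varphi$ the identity.
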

\begin{proof}
First, note that the set $\Theta$ of admissible parameters
$\theta$ is a linear space. Let us write $\bar{w}(\omega)$
$\bar{x}_\theta(\omega)$ and $\bar{u}_\theta(\omega)$ to express
the dependence of $\bar{w}$, $\bar{x}_\theta$ and $\bar{u}_\theta$
on the random event $\omega\in\Omega$. Fix $\omega$ arbitrarily.
Since the
mapping 
$$
\theta\mapsto
\begin{bmatrix}
\bar{x}_\theta(\omega) \\
\bar{u}_\theta(\omega)
\end{bmatrix}=
\begin{bmatrix}
\bar{A}\x0+\bar{B}(\bar{G}\bar{w}(\omega)+\bar{d})+\bar{D}\bar{w}(\omega) \\
\bar{G}\bar{w}(\omega)+\bar{d}
\end{bmatrix}
$$
is affine and the mapping $(\bar{x},\bar{u})\mapsto
V(\bar{x},\bar{u})$ is assumed convex, their combination
$\theta\mapsto V(\bar{x}_\theta,\bar{u}_\theta)$ is a convex
function of $\theta$.
Then, the result follows from Proposition~\ref{thm:phietaconvex}
with
$\gamma(\omega,\theta)=V\big(\bar{x}_\theta(\omega),\bar{u}_\theta(\omega)\big)$
and $\varphi$ equal to the identity map.
\end{proof}%
By virtue of the alternative assumptions (iii$'$) and (iii$''$) of
Proposition~\ref{thm:phietaconvex}, the requirement that
$\EE[V(\bar{x}_\theta,\bar{u}_\theta)]$ be finite for all $\theta$
may be relaxed. A sufficient requirement is that there exist no
two values $\theta$ and $\theta'$ such that
$J_{x_0}(\theta)=+\infty$ and $J_{x_0}(\theta')=-\infty$. In
particular, the result applies to quadratic cost functions of the
type~(\ref{eq:quadratic}), with $M\geq 0$.
\par
In general, the relaxed constraint~\eqref{eq:constraint-2} is
nonconvex even if the components $\eta_i:\RR^{(N+1)n\times Nm}\to
\RR$, with $i=1,\ldots, r$, of the vector function $\eta$ are
convex. In the next sections we will study the convexity and
provide convex approximations of~\eqref{eq:constraint-2} for
different approaches to probabilistic relaxation of hard
constraints, i.e. for different choices of the function $\phi$.

\section{Chance Constraints}
\label{sec:cc}

For a given $\alpha \in\;]0,1[$, we relax the hard constraint
$\eta(\bar x_\theta, \bar u_\theta)\leq0$ by requiring that it be
satisfied with probability $1-\alpha$. Hence we address the
optimization problem
\begin{align}
\inf_{\theta\in \Theta}\quad&\EE[V(\bar{x}_\theta,\bar{u}_\theta)] \label{e:objectiveCC} \\
\textrm{subject to}\quad&\eqref{e:ubardef},~\eqref{eq:cloopsys}\textrm{ and} \label{e:dynconstr} \\
&\PP(\eta(\bar x_\theta, \bar u_\theta) \le 0)\geq 1-\alpha \label{e:constr}.
\end{align}
The smaller $\alpha$, the better the approximation of the hard
constraint~(\ref{eq:constraint}) at the expense of a more
constrained optimization problem. This problem is obtained as a
special case of
Problem~\eqref{eq:objective-2}--\eqref{eq:constraint-2} by setting
$R=1$ and defining $\phi$ as
$$\phi_{CC}(\eta)=1-\prod_{i=1}^r \indic{]-\infty, 0]}(\eta_i)-\alpha,$$
where $\indic{]-\infty, 0]}(\cdot)$ is the standard indicator
function. We now study the convexity of \eqref{e:constr} with
respect to $\theta$.

\subsection{The fixed feedback case}
\label{sec:openloop}

First assume that the feedback term $\bar G$ in~\eqref{e:ubardef}
is fixed and consider the convexity of the optimization
problem~\eqref{e:objectiveCC}--\eqref{e:constr} with respect to
the open loop control action $\bar d$. That is, for a given $\bar
G$ in the form~\eqref{e:parstruc}, the parameter space $\Theta$
becomes the set $\{(\bar G,\bar d),~\forall \bar d\in\RR^{Nm}\}$.
For the given $\bar{G}$ and $i=1,\ldots, r$ define
$$g_i(\bar d,\bar w)=\eta_i(\bar{A}\x0+\bar{B}(\bar{G}\bar{w}+\bar{d})+\bar{D}\bar{w},\bar{G}\bar{w}+\bar{d}),$$
where $\eta_i:\RR^{(N+1)n + Nm} \to \RR$ is the $i$-th element of
the constraint function $\eta$. Define
\begin{equation}
\label{eq:reducedprob}
p(\bar{d})=\PP[g_1(\bar d, \bar w)\leq 0,\ldots
g_r(\bar d, \bar w)\leq 0]
\end{equation}
and $\F_{CC}\triangleq\{\bar{d}:~p(\bar{d})\geq 1-\alpha\}$.
Observe that $\F_{CC}$ corresponds to the constraint set dictated
by~\eqref{e:dynconstr}--\eqref{e:constr} when $\bar G$ is fixed.

\begin{proposition}
\label{thm:convexopenloop} Assume that $\bar w$ has a continuous
distribution with log-concave probability density and that, for
$i=1,\ldots, r$, $g_i:\RR^{Nm+Nn}\to \RR$ is quasi-convex. Then,
for any value of $\alpha\in ]\,0,1[$, $\F_{CC}$ is convex. As a
consequence, under Assumption~\ref{a:convexobj} and for any
$\alpha\in ]\,0,1[$, the optimization problem
\begin{align*}
\inf_{\theta\in \Theta}\quad&\EE[V(\bar{x}_\theta,\bar{u}_\theta)]\\
\mathrm{subject~to}\quad&\eqref{e:ubardef},~\eqref{eq:cloopsys}~\mathrm{and}~p(\bar{d})\geq 1-\alpha
\end{align*}
is convex.
\end{proposition}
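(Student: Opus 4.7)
The plan is to reduce the convexity of $\F_{CC}$ to a log-concavity statement about the probability function $p$, which can then be settled by invoking Prékopa's theorem on log-concave measures. Since $\bar d\mapsto V(\bar x_\theta,\bar u_\theta)$ is convex by Proposition~\ref{proposition:CostConvexity} (the restriction of a convex function on $\Theta$ to an affine slice where $\bar G$ is fixed remains convex), the convexity of the full optimization problem will follow once the feasible set $\F_{CC}$ is shown to be convex.

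First I would set up the joint constraint set in the combined $(\bar d,\bar w)$ space. Each $g_i$ is quasi-convex on $\RR^{Nm+Nn}$, so the $0$-sublevel set $\{(\bar d,\bar w):g_i(\bar d,\bar w)\leq0\}$ is a convex subset of $\RR^{Nm+Nn}$. Their intersection
$$C\triangleq\bigcap_{i=1}^{r}\bigl\{(\bar d,\bar w)\in\RR^{Nm+Nn}:g_i(\bar d,\bar w)\leq0\bigr\}$$
is therefore convex. Denoting by $f_{\bar w}$ the (log-concave) density of $\bar w$, I would rewrite
$$p(\bar d)=\int_{\RR^{Nn}}\indic{C}(\bar d,\bar w)\,f_{\bar w}(\bar w)\,d\bar w.$$
The integrand is the product of two log-concave functions on $\RR^{Nm+Nn}$: the indicator of the convex set $C$ (log-concave in the extended-real-valued sense) and the log-concave density $f_{\bar w}$ regarded as a function of $(\bar d,\bar w)$ depending only on $\bar w$. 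Hence the integrand is itself log-concave on $\RR^{Nm+Nn}$.

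Next, I would invoke the Prékopa–Leindler theorem: the marginal of a log-concave function obtained by integrating out a subset of coordinates is log-concave in the remaining coordinates. Applied here, this yields that $\bar d\mapsto p(\bar d)$ is log-concave on $\RR^{Nm}$. Since any superlevel set of a log-concave function is convex, the set $\F_{CC}=\{\bar d:p(\bar d)\geq1-\alpha\}$ is convex for every $\alpha\in(0,1)$, proving the first assertion.

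For the second assertion, observe that when $\bar G$ is fixed the parameter $\theta=(\bar G,\bar d)$ varies only in the affine subspace determined by $\bar d$, so Proposition~\ref{proposition:CostConvexity} (together with Assumption~\ref{a:convexobj}) implies that $\bar d\mapsto\EE[V(\bar x_\theta,\bar u_\theta)]$ is convex. Minimizing a convex objective over the convex set $\F_{CC}$ is a convex program, which gives the result. The only delicate point is the log-concavity transfer step; I would cite Prékopa's theorem directly rather than reproving it, but would be careful to note that log-concavity must be understood on the full joint space $\RR^{Nm+Nn}$ in order for the marginalization in $\bar w$ to yield log-concavity in $\bar d$.
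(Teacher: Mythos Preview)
Your proposal is correct and follows essentially the same route as the paper: both establish that $\bar d\mapsto p(\bar d)$ is log-concave via Pr\'ekopa's theory, whence the superlevel set $\F_{CC}$ is convex, and then invoke Assumption~\ref{a:convexobj} (through Proposition~\ref{proposition:CostConvexity}) for the objective. The only difference is cosmetic: the paper cites Pr\'ekopa's theorem on probability functions with quasi-convex constraints as a black box, whereas you unpack that step by writing $p(\bar d)=\int \indic{C}(\bar d,\bar w)f_{\bar w}(\bar w)\,d\bar w$ and applying the Pr\'ekopa--Leindler marginalization argument directly.
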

\begin{proof}
It follows from~\cite[Theorem 10.2.1]{prekopa}
that~\eqref{eq:reducedprob} is a log-concave function of
$\bar{d}$, i.e. the mapping $\bar{d}\mapsto \log p(\bar{d})$ is
concave. Since $\log$ is a monotone increasing function, we may
write that $\F_{CC}=\{\bar{d}:~\log p(\bar{d})\geq \log
(1-\alpha)\}$. Hence, $\F_{CC}$ is a convex set. The convexity of
the optimization problem follows readily from
Assumption~\ref{a:convexobj}.
\end{proof}%
Among others, Gaussian, exponential and uniform distributions are
continuous with log-concave probability density. As for the
functions $g_i$, one case of interest where the assumptions of
Proposition~\ref{thm:convexopenloop} are fulfilled is when
$\eta(\bar{x},\bar{u})$ is affine in $\bar{x}$ and $\bar{u}$. This
is the case of polytopic constraints, which will be treated
extensively in the next section. Apparently, this convexity result
cannot be applied to the ellipsoidal constraints treated
subsequently in Section~\ref{sec:ellipconstr}, nor can it be
extended to the general constraint~\eqref{e:constr} with both
$\bar d$ and $\bar G$ varying. Loosely speaking, the latter is
because the functions $g_i$ are not simultaneously quasi-concave
in $\bar{w}$ and $\bar{G}$. In the next sections we will develop
convex conservative approximations of~\eqref{e:constr} for various
definitions of $\eta$.

\subsection{Polytopic Constraint Functions}
\label{s:polyt}

Throughout the rest of Section~\ref{sec:cc} we shall rely on the
following assumption.
\begin{assumption}
\label{a:Gaussian} $\bar w$ is a Gaussian random vector with mean
zero and covariance matrix $\bar \Sigma>0$, denoted
by~$\bar{w}\sim\mc N(0,\bar\Sigma)$.
\end{assumption}%
Polytopic constraint functions
\begin{equation}
\label{e:etadef}
    \eta(\bar x_\theta,\bar u_\theta)=T^x\bar x_\theta +T^u \bar u_\theta - y,
\end{equation}
where $T^x\in \mb R^{r\times (N+1)n}, T^u\in \mb R^{r\times Nm}$,
and $y\in \mb R^r$, describe one of the most common types of
constraints. In light of~\eqref{e:ubardef}
and~\eqref{eq:cloopsys},
\begin{equation}
\label{e:appr2}
    \eta(\bar x_\theta,\bar u_\theta) =  h_\theta + P_\theta\bar w,
\end{equation}
where $h_\theta = (T^x\bar Ax_0- y)+(T^x \bar B + T^u)\bar d,$ and
$P_\theta = (T^x\bar D + (T^x\bar B + T^u)\bar G)$. It is thus
apparent that $\eta(\bar{x}_\theta,\bar{u}_\theta)$ is affine in
the parameters $\theta$. Yet, in general,
constraint~\eqref{e:constr} is nonconvex.
We now describe three approaches to approximate
constraints~\eqref{e:constr} that lead to convex conservative
constraints.

\subsubsection{Approximation via constraint separation}
\label{s:sep} Constraint~\eqref{e:constr} requires us to satisfy
$\eta(\bar x_\theta, \bar u_\theta)\leq 0$ with probability of at
least $1-\alpha$. Here $\eta \in \mb R^r$ and the inequality $\eta
\leq 0$ is interpreted componentwise. One idea is to select
coefficients $\alpha_i\in (0,1)$ such that $\sum_{i=1}^r \alpha_i
= \alpha$ and to satisfy the inequalities $\mb P(\eta_i\leq 0)\geq
1-\alpha_i$, with $i=1,\ldots, r$.
Note that this choice is obtained in~\eqref{eq:constraint-2} by
setting $R=r$ and
$\phi_i(\eta)=1-\indic{]-\infty,0]}(\eta_i)-\alpha_i$ where, for
$i=1,\ldots, r$, $\phi_i:\RR^r\to\RR$ denotes the $i$-th component
of function $\phi$.
\par Let
$h_{i,\theta}$ and $P_{i,\theta}^T$ be the $i$-th entry of
$h_\theta$ and the $i$-th row of $P_\theta$, respectively, and let
$\bar\Sigma^{\frac{1}{2}}$ be a symmetric real matrix square root
of $\bar\Sigma$.
\begin{proposition}
\label{prop:constrsep} Let $\alpha_i\in ]0,1[$. Under
Assumption~\ref{a:Gaussian}, the constraint
    \[
        \mb P(\eta_i(\bar x_\theta, \bar u_\theta) \leq 0)\geq 1-\alpha_i,
    \]
    with $\eta$ defined as in~\eqref{e:etadef}, is equivalent to the second-order cone constraint in the parameters $\theta \in\Theta$
$$h_{i, \theta} + \beta_i\norm{\bar \Sigma^{\frac{1}{2}}P_{i,
\theta}}\leq 0$$ where $\beta_i= \sqrt 2\erf^{-1}(1-2\alpha_i)$
and $\erf^{-1}(\cdot)$ is the inverse of the standard error
function $\erf(x) = \frac{2}{\sqrt{\pi}}\int_{0}^x
\epower{-u^2}\:\mrm du$. As a consequence, under
Assumption~\ref{a:convexobj} and if
$\alpha_1+\ldots+\alpha_r=\alpha$, the problem
\begin{align*}
\inf_{\theta\in \Theta}\quad&\EE[V(\bar{x}_\theta,\bar{u}_\theta)] \\
\mathrm{subject~to}\quad&\eqref{e:ubardef},~\eqref{eq:cloopsys}~\mathrm{and}
~h_{i, \theta} + \beta_i\norm{\bar \Sigma^{\frac{1}{2}}P_{i,
\theta}}\leq 0,~ i=1,\ldots, r
\end{align*}
is a convex conservative approximation of
Problem~\eqref{e:objectiveCC}--\eqref{e:constr}.
\end{proposition}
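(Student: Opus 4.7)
The plan is to prove the equivalence first at the level of a single chance constraint, then assemble the convex problem and verify conservatism via a simple union bound.

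For the equivalence, I would start from the representation~\eqref{e:appr2}, which for each row $i$ gives $\eta_i(\bar x_\theta,\bar u_\theta)=h_{i,\theta}+P_{i,\theta}^T\bar w$. Under Assumption~\ref{a:Gaussian}, this is a scalar Gaussian with mean $h_{i,\theta}$ and variance $P_{i,\theta}^T\bar\Sigma P_{i,\theta}=\|\bar\Sigma^{1/2}P_{i,\theta}\|^2$. Denote the standard deviation by $\sigma_{i,\theta}$. If $\sigma_{i,\theta}>0$, the chance constraint becomes $\Phi(-h_{i,\theta}/\sigma_{i,\theta})\geq 1-\alpha_i$, where $\Phi$ is the standard normal CDF, which (using $\Phi^{-1}(p)=\sqrt 2\,\erf^{-1}(2p-1)$) is equivalent to $h_{i,\theta}+\beta_i\sigma_{i,\theta}\leq 0$ with $\beta_i=\sqrt 2\,\erf^{-1}(1-2\alpha_i)$. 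The degenerate case $\sigma_{i,\theta}=0$ reduces to the deterministic $h_{i,\theta}\leq 0$, which is also captured by the same SOC inequality since the norm term vanishes. This establishes the first claim.

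Next I would observe that both $\theta\mapsto h_{i,\theta}$ and $\theta\mapsto P_{i,\theta}$ are affine in $\theta=(\bar G,\bar d)$, by the explicit expressions $h_{i,\theta}=(T^x\bar A x_0-y)_i+((T^x\bar B+T^u)\bar d)_i$ and $P_{i,\theta}^T=(T^x\bar D+(T^x\bar B+T^u)\bar G)_{i,\cdot}$. Hence $\theta\mapsto \beta_i\|\bar\Sigma^{1/2}P_{i,\theta}\|$ is the composition of a norm (convex) with an affine map (thus convex) for $\beta_i\geq 0$, which holds since $\alpha_i<1/2$ need not be assumed\,---\,for $\alpha_i\geq 1/2$ one has $\beta_i\leq 0$, but the sum $h_{i,\theta}+\beta_i\|\bar\Sigma^{1/2}P_{i,\theta}\|$ is still convex in $\theta$ only if $\beta_i\geq 0$; I would therefore emphasize that for the conservative approximation of interest $\alpha_i$ is chosen small enough that $\beta_i\geq 0$ (in particular if one insists on $\alpha_i\leq 1/2$, a natural assumption since $\sum_i\alpha_i=\alpha<1$ and $r\geq 2$ typically makes this automatic). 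Adding the affine term $h_{i,\theta}$ preserves convexity, so each constraint defines a convex set in $\Theta$, their intersection is convex, and combined with the convex objective provided by Proposition~\ref{proposition:CostConvexity} under Assumption~\ref{a:convexobj}, the approximating problem is convex.

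Finally, to show that the SOC problem is a conservative approximation of \eqref{e:objectiveCC}--\eqref{e:constr} when $\alpha_1+\dots+\alpha_r=\alpha$, I would apply a union bound: if $\theta$ is feasible for the SOC problem then, by the already established equivalence, $\mathbb{P}(\eta_i(\bar x_\theta,\bar u_\theta)>0)\leq \alpha_i$ for every $i$, hence
\[
\mathbb{P}\bigl(\eta(\bar x_\theta,\bar u_\theta)\not\leq 0\bigr)=\mathbb{P}\Bigl(\bigcup_{i=1}^r\{\eta_i>0\}\Bigr)\leq\sum_{i=1}^r\alpha_i=\alpha,
\]
so \eqref{e:constr} holds. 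Thus every feasible $\theta$ of the SOC problem is feasible for \eqref{e:objectiveCC}--\eqref{e:constr} with the same objective value, proving that the former is a (convex) inner approximation. The only delicate point worth flagging is the sign of $\beta_i$, which is the single technical issue; everything else is a direct application of Gaussian tail formulas and elementary convex analysis.
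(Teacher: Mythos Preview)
Your proposal is correct and follows essentially the same route as the paper: identify each $\eta_i$ as a scalar Gaussian via~\eqref{e:appr2}, convert the tail probability into the inequality $h_{i,\theta}+\beta_i\|\bar\Sigma^{1/2}P_{i,\theta}\|\le 0$, note affinity of $h_{i,\theta}$ and $P_{i,\theta}$ in $\theta$, and conclude with a union bound. You are in fact more explicit than the paper on two points it leaves implicit---the degenerate case $\sigma_{i,\theta}=0$ and the union-bound step---and you correctly flag the sign of $\beta_i$ as the one place where the ``second-order cone'' label requires $\alpha_i\le 1/2$, an issue the paper does not comment on.
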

\begin{proof}
    From Eq.~\eqref{e:appr2} we can write
    $\eta_i = h_{i, \theta} + P_{i, \theta}^T \bar w$.
    Since $\bar w\sim\mathcal N(0,\bar\Sigma)$, $\eta_i$ is also Gaussian with distribution $\mathcal N(h_{i, \theta}, P_{i, \theta}^T\bar\Sigma P_{i,\theta})$.
    It is easily seen that, for any scalar Gaussian random variable $X$ with distribution $\mathcal
    N(\mu,\sigma^2)$,
    \[
        \mb P(X\leq0)\geq 1-\alpha  \iff \mu+\beta\sigma\leq0,
    \]
    where $\beta = \sqrt 2\erf^{-1}(1-2\alpha)$.
    Hence the constraint $\mb P(\eta_i(\bar x_\theta, \bar u_\theta)\leq0)\geq 1-\alpha_i$ is equivalent to $h_{i, \theta} + \beta_i\norm{\bar \Sigma^{\frac{1}{2}}P_{i, \theta}}\leq 0$,
    where $\beta_i=\sqrt 2\erf^{-1}(1-2\alpha_i)$. Since $h_{i, \theta}$ and $P_{i, \theta}$ are both affine in the parameters $\theta=(\bar{G},\bar{d})$,
    the above constraint is a second-order cone constraint in
    $\theta$. It is easy to see
that this choice guarantees that $\mb P(\eta(\bar x_\theta, \bar
u_\theta) \le 0) \ge 1-\alpha$.
\end{proof}

\subsubsection{Approximation via confidence ellipsoids}
\label{s:confellipsoids}

The approach of Section~\ref{s:sep} may be too conservative since
the probability of a union of events is approximated by the sum of
the probabilities of the individual events. One can also calculate
a conservative approximation of the union at once.
Constraint~\eqref{e:constr} with $\eta(\bar x_\theta, \bar
u_\theta)$ as in~\eqref{e:appr2} restricts the choice of
$P_\theta$ and $h_\theta$ to be such that, with a probability of
$1-\alpha$ or more, the realization of random vector $\eta(\bar
x_\theta, \bar u_\theta)$ lies in the negative orthant $\eta(\bar
x_\theta, \bar u_\theta)\leq 0$. In general, it is difficult to
describe this constraint explicitly since it involves the
integration of the probability density of $\eta$ over the negative
orthant. However, an explicit approximation of the constraint can
be computed
by ensuring that the $100(1-\alpha)\%$ confidence ellipsoid of
$\eta$ is contained in the negative orthant. Fulfilling this
requirement automatically implies that the probability of
$\eta(\bar x_\theta, \bar u_\theta)\leq 0$ is strictly greater
than $1-\alpha$.\par Since $\bar w\sim\mc N(0,\bar\Sigma)$, it
follows that $\eta(\bar x_\theta, \bar u_\theta) =
h_\theta+P_\theta\bar w \sim \mc N(h_\theta,\bar\Sigma_{\theta})$,
with $\bar\Sigma_{\theta} := P_\theta\bar\Sigma P_\theta^T$.
Consider the case where $\bar\Sigma_\theta$ is invertible. Define
the $r$-dimensional ellipsoid
\begin{equation}
\label{e:ellipsoid}
    \mc E(h_\theta,\bar \Sigma_\theta, \beta) = \bigl\{\eta\in\RR^r: \;\big|\;(\eta-h_\theta)^T \bar \Sigma_\theta^{-1}(\eta-h_\theta)\leq \beta^2\bigr\},
\end{equation}
where $\beta>0$ is a parameter specifying the size of the
ellipsoid. Notice that, in general, $\bar\Sigma_\theta$ is
invertible when $r\leq Nn$ (i.e. the number of constraints is less
than the total dimension of the process noise). If $r > Nn$, as
there are $Nn$ independent random variables in the optimization
problem, the following result still holds with $Nn$ in place of
$r$.

\begin{proposition}
\label{prop:confellip}
    Let $\alpha\in\;]0, 1[$. Under Assumption~\ref{a:Gaussian},
    the constraint $\mb P[\eta(\bar x_\theta, \bar
u_\theta) \le 0] \ge 1-\alpha$ with $\eta$ defined as
in~\eqref{e:etadef} is conservatively approximated by the
constraint
    \begin{equation}
    \label{eq:confellip}
\mc E\big(h_\theta,\bar \Sigma_\theta, \beta(\alpha) \big)\subset (-\infty, 0\,]\,^r,
\end{equation}
where $\beta(\alpha)=\sqrt{F^{-1}(1-\alpha)}$ and $F(\cdot)$ is
the probability distribution function of a $\chi^2$ random
variable with $r$ degrees of freedom.
Moreover,~\eqref{eq:confellip} can be reformulated as the set of
second-order cone constraints
\begin{equation}
\label{eq:equivconfellip}
h_{i,\theta} + \beta(\alpha) \norm{\bar\Sigma^\frac{1}{2}P_{i,
\theta}} \leq 0,\quad i=1, \ldots,
 r.
 \end{equation}
As a consequence, under Assumption~\ref{a:convexobj}, the problem
\begin{align*}
\inf_{\theta\in \Theta}\quad&\EE[V(\bar{x}_\theta,\bar{u}_\theta)] \\
\mathrm{subject~to}\quad&\eqref{e:ubardef},~\eqref{eq:cloopsys}~\mathrm{and}~\eqref{eq:equivconfellip}
\end{align*}
is a convex conservative approximation of
Problem~\eqref{e:objectiveCC}--\eqref{e:constr}.
\end{proposition}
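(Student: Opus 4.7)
The plan is to prove the three claims of the proposition in sequence: the conservative approximation property, the equivalent second-order cone reformulation of the ellipsoid containment, and finally the convexity of the overall optimization problem.

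First I would establish the distributional fact underpinning the confidence ellipsoid. Substituting~\eqref{e:appr2} and using Assumption~\ref{a:Gaussian}, one has $\eta(\bar x_\theta,\bar u_\theta) = h_\theta + P_\theta \bar w \sim \mc N(h_\theta,\bar\Sigma_\theta)$ with $\bar\Sigma_\theta = P_\theta \bar\Sigma P_\theta^T$. When $\bar\Sigma_\theta$ is invertible, the quadratic form $(\eta-h_\theta)^T\bar\Sigma_\theta^{-1}(\eta-h_\theta)$ is a standard chi-squared random variable with $r$ degrees of freedom. Hence, by definition of $\beta(\alpha) = \sqrt{F^{-1}(1-\alpha)}$, we have $\PP\bigl[\eta \in \mc E(h_\theta,\bar\Sigma_\theta,\beta(\alpha))\bigr] = 1-\alpha$. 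If~\eqref{eq:confellip} holds, then by monotonicity of probability measure, $\PP[\eta \le 0] \geq \PP[\eta \in \mc E] = 1-\alpha$, which establishes that~\eqref{eq:confellip} is a conservative approximation of the chance constraint.

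Next I would show the equivalence between the geometric containment~\eqref{eq:confellip} and the SOC form~\eqref{eq:equivconfellip}. Since the negative orthant is the intersection of the half-spaces $\{\eta: \eta_i \le 0\}$, the containment $\mc E \subset (-\infty,0]^r$ holds if and only if $\max_{\eta\in\mc E} \eta_i \leq 0$ for each $i=1,\ldots,r$. Parametrising $\mc E$ via $\eta = h_\theta + \bar\Sigma_\theta^{1/2}v$ with $\norm{v} \le \beta(\alpha)$, we obtain $\eta_i = h_{i,\theta} + e_i^T \bar\Sigma_\theta^{1/2}v$, and by Cauchy--Schwarz
\[
\max_{\eta\in\mc E}\eta_i = h_{i,\theta} + \beta(\alpha)\,\sqrt{e_i^T\bar\Sigma_\theta e_i} = h_{i,\theta} + \beta(\alpha)\norm{\bar\Sigma^{1/2} P_{i,\theta}},
\]
using $e_i^T\bar\Sigma_\theta e_i = P_{i,\theta}^T\bar\Sigma P_{i,\theta} = \norm{\bar\Sigma^{1/2}P_{i,\theta}}^2$. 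Requiring this maximum to be nonpositive for every $i$ yields~\eqref{eq:equivconfellip}. (When $\bar\Sigma_\theta$ is singular, the same reasoning applies on the effective support of $\eta$, which is why the remark about replacing $r$ with $Nn$ goes through unchanged.)

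Finally, for the convexity statement I would observe that $h_{i,\theta}$ and $P_{i,\theta}$ are affine in $\theta=(\bar G,\bar d)$ (this follows directly from the definitions of $h_\theta$ and $P_\theta$ given after~\eqref{e:appr2}). Hence $\theta \mapsto \bar\Sigma^{1/2}P_{i,\theta}$ is affine, $\theta \mapsto \norm{\bar\Sigma^{1/2}P_{i,\theta}}$ is convex as the composition of a norm with an affine map, and adding the affine $h_{i,\theta}$ preserves convexity. Each constraint in~\eqref{eq:equivconfellip} therefore defines a convex (second-order cone) region in $\Theta$, and their intersection is convex. Combined with the convexity of the objective $\EE[V(\bar x_\theta,\bar u_\theta)]$ granted by Proposition~\ref{proposition:CostConvexity} under Assumption~\ref{a:convexobj}, this yields a convex optimization problem that conservatively approximates~\eqref{e:objectiveCC}--\eqref{e:constr}.

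The main obstacle is the support-function calculation in the second step: one needs to recognize the geometric containment as a finite set of componentwise maximizations and carry out the explicit maximization of a linear functional over the ellipsoid, producing the key identity $\max_{\eta\in\mc E}\eta_i = h_{i,\theta} + \beta(\alpha)\norm{\bar\Sigma^{1/2}P_{i,\theta}}$. The rest is a direct distributional argument plus standard preservation of convexity under affine composition.
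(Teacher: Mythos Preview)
Your proof is correct and follows essentially the same approach as the paper: establish the $\chi^2$ confidence-ellipsoid fact, reduce the containment $\mc E\subset(-\infty,0]^r$ to $r$ componentwise maximizations of a linear functional over the ellipsoid, and then invoke affinity of $h_{i,\theta}$ and $P_{i,\theta}$ to obtain second-order cone constraints. The only cosmetic difference is that the paper explicitly identifies the maximizer $u=M_\theta^Te_i/\norm{M_\theta^Te_i}$ where you invoke Cauchy--Schwarz, and the paper absorbs $\beta(\alpha)$ into $M_\theta$ rather than into the bound on $\norm v$; neither affects the argument.
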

\begin{proof}
Since $\eta(\bar x_\theta, \bar u_\theta)\sim \mc
N(h_\theta,\bar\Sigma_{\theta})$, the random variable
$$\big(\eta(\bar x_\theta, \bar u_\theta)-h_\theta\big)^T \bar
\Sigma_\theta^{-1}\big(\eta(\bar x_\theta, \bar
u_\theta)-h_\theta\big)$$ is $\chi^2$ with $r$ degrees of freedom.
Then, choosing $\beta$ such that $F(\beta^2)=1-\alpha$ guarantees
that $\mc E\big(h_\theta,\bar \Sigma_\theta, \beta(\alpha) \big)$
is the $100(1-\alpha)\%$ confidence ellipsoid for
$\eta(\bar{x}_\theta,\bar{u}_\theta)$. Finally,
under~\eqref{eq:confellip}, $\mb P[\eta(\bar x_\theta, \bar
u_\theta) \le 0] \ge \PP[\eta(\bar x_\theta, \bar u_\theta)\in\mc
E\big(h_\theta,\bar \Sigma_\theta, \beta(\alpha) \big)]=1-\alpha$,
which proves the first claim. To prove the second claim, note
that~\eqref{e:ellipsoid} can alternatively be represented as
\begin{equation}
\label{e:arbit1}
\mc E\big(h_\theta,\bar \Sigma_\theta, \beta \big)=
    \bigl\{\eta\in\RR^r \:\big|\: \eta =  h_\theta+M_\theta u ,\; \norm u\leq 1\bigr\},
\end{equation}
where $M_\theta = \beta \bar\Sigma_{\theta}^\frac{1}{2}$. Since
$\eta\leq 0$ if and only if $e_i^T \eta\leq 0,\: i =1,\cdots,r$,
where the $e_i$ denote the standard basis vectors in $\RR^r$, we
may rewrite~\eqref{eq:confellip} as
\[
    e_i^T(h_\theta + M_\theta u)\leq 0 \quad \forall \norm{u} \leq 1,
\]
or equivalently
\begin{align*}
    \sup_{\norm u\leq 1} e_i^T(h_\theta + M_\theta u) \leq 0
\end{align*}
for $i =1,\cdots,r$. For each $i$, the supremum is attained with
$u = M_\theta^T e_i/\norm{M_\theta^T e_i}$; therefore, the above
is equivalent to $e_i^T h_\theta +\norm{M_\theta^Te_i} \leq 0$.
Clearly, $e_i^T h_\theta=h_{i,\theta}$. Moreover, since $M_\theta
= \beta \bar\Sigma_{\theta}^\frac{1}{2}$, we have
\[
    \norm{M_\theta^Te_i} = \beta \sqrt {e_i^T\bar\Sigma_{\theta}e_i} = \beta \sqrt{ e_i^T P_\theta\bar\Sigma P_\theta^T e_i } =
\beta\norm{\bar\Sigma^\frac{1}{2}P_{i, \theta}}.
\]
Therefore, constraint~\eqref{eq:confellip} reduces
to~\eqref{eq:equivconfellip}. Since the variables $h_{i,\theta}$
and $P_{i,\theta}$ are affine in the original parameters $\theta =
(\bar G, \bar d)$, this is an intersection of second order cone
constraints. As a result, under the additional
Assumption~\ref{a:convexobj}, the optimization of
$\EE[V(\bar{x}_\theta,\bar{u}_\theta)]$
with~\eqref{eq:equivconfellip} in place of~\eqref{e:constr} is a
convex conservative approximation
of~\eqref{e:objectiveCC}--\eqref{e:constr}.
\end{proof}

\subsubsection{Comparison of constraint separation and confidence ellipsoid methods}
\label{s:comparison}

In light of Propositions~\ref{prop:constrsep}
and~\ref{prop:confellip}, both approaches lead to formulating
constraints of the form
$$\mu_i+\beta\sigma_i\leq 0,$$
with $i=1,\ldots, r$, where $\mu_i$ and $\sigma_i$ are the mean
and the standard deviation of the scalar random variables
$\eta_i(\bar{x}_\theta,\bar{u}_\theta)$, and smaller values of the
constant $\beta$ correspond to less conservative approximations of
the original chance constraint. For a given value of $\alpha$, the
value of $\beta$ depends on the number of constraints $r$ in a way
that differs in the two cases. In the confidence ellipsoid method,
in particular, the value of $\beta$ is determined by $N n$ (the
total dimension of the process noise) when $r\geq N n$. In
Figure~\ref{figure:betagrowth}, we compare the growth of $\beta$
in the two approaches under the assumption that $r$ grows linearly
with the horizon length $N$. (For the constraint separation method
we choose $\alpha_1=\ldots=\alpha_r=\alpha/r$, so that the value
of $\beta$ is the same for all constraints.)
\begin{figure}
\centering
\includegraphics[keepaspectratio=true,width=11cm]{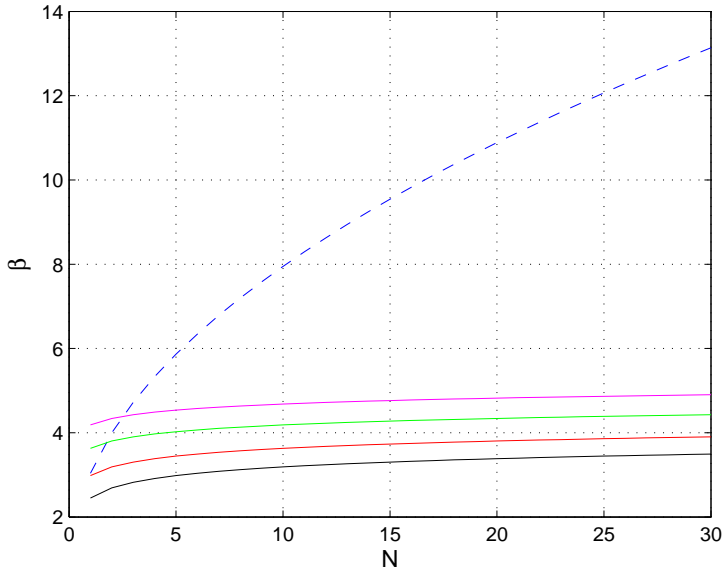}
\caption{Values of $\beta$ as a function of the horizon length $N$ for the ellipsoidal method (dashed line) and the constraint separation method (solid lines). It is assumed that
$r=N\cdot\bar{r}\cdot(n+m)$, where $n=5$ is the dimension of the system state and of the process noise, $m=2$ is the dimension of the input $u$. $\bar r$ reflects the number of constraints per stage;
we take $\bar{r}=2,10,100,1000$.
The value of $\beta$ is invariant with respect to $\bar{r}$ (single dashed line) for the ellipsoidal method, while it increases with $\bar{r}$ (multiple solid lines) for the constraint separation method.
}\label{figure:betagrowth}
\end{figure}
The increase of $\beta$ is quite rapid in the confidence ellipsoid
method, which is only effective for a small number of constraints.
An explanation of this phenomenon is provided by the following
fact, that is better known by the name of (classical)
``concentration of measure'' inequalities; proofs may be found in,
e.g.,~\cite{ref:bogachev}.
    \begin{proposition}
    \label{p:conc}
        Let $\Gamma_{h, \Sigma'}$ be the $r$-dimensional Gaussian measure with mean $h$ and (nonsingular) covariance $\Sigma'$, i.e.,
        $$ \Gamma_{h, \Sigma'}(\mrm d\xi)= \frac{1}{(2\pi)^{r/2}\sqrt{\det\Sigma'}}\exp\biggl(-\frac{1}{2}\inprod{\xi-h}{\Sigma'^{-1}(\xi-h)}\biggr) \mrm d\xi.
        $$
        Then for $\eps\in\;]0, 1[$,
        \begin{enumerate}[label=(\roman*), leftmargin=*, widest=iii]
            \item $\displaystyle{\Gamma_{h, \Sigma'}\biggl(\biggl\{\xi\;\bigg|\norm{\xi - h}_{\Sigma'^{-1}} > \sqrt{\frac{r}{1-\eps}}\biggr\}\biggr) \le \epower{-\frac{r\eps^2}{4}}}$;
            \item $\displaystyle{\Gamma_{h, \Sigma'}\bigl(\bigl\{\xi\big|\norm{\xi - h}_{\Sigma'^{-1}} < \sqrt{r(1-\eps)}\bigr\}\bigr) \le \epower{-\frac{r\eps^2}{4}}}$.
        \end{enumerate}
    \end{proposition}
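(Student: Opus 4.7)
The plan is to reduce both parts to classical Chernoff bounds for the chi-squared distribution. By writing $\xi = h + {\Sigma'}^{1/2} z$ with $z \sim \mc N(0, I_r)$ under $\Gamma_{h,\Sigma'}$, the scalar random variable $Q \triangleq (\xi-h)^T {\Sigma'}^{-1}(\xi-h) = z^T z$ is chi-squared with $r$ degrees of freedom. Part~(i) then reads $\PP(Q > r/(1-\eps)) \le \epower{-r\eps^2/4}$ and part~(ii) reads $\PP(Q < r(1-\eps)) \le \epower{-r\eps^2/4}$, both to be proved for $\eps \in\;]0,1[$.

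For part~(i), I would start from the moment generating function $\EE[\epower{tQ}] = (1-2t)^{-r/2}$, valid for $t < 1/2$, and apply Markov's inequality: for any $t \in (0, 1/2)$,
$$\PP(Q > a) \le (1-2t)^{-r/2}\epower{-ta}.$$
With $a = r/(1-\eps)$, the minimizing choice is $t = \eps/2$, which yields the bound $(1-\eps)^{-r/2}\epower{-r\eps/(2(1-\eps))}$. It remains to verify the scalar inequality
$$\frac{\eps}{2(1-\eps)} + \frac{1}{2}\log(1-\eps) \ge \frac{\eps^2}{4}\quad \text{on }\;]0,1[,$$
which follows because both sides vanish at $\eps = 0$ and the derivative of the left-hand side minus the right-hand side equals $\tfrac{\eps}{2}\bigl((1-\eps)^{-2} - 1\bigr) \ge 0$.

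For part~(ii), I would bound the lower tail by $\PP(Q < a) \le \epower{ta}\EE[\epower{-tQ}] = \epower{ta}(1+2t)^{-r/2}$ for $t > 0$, choose $a = r(1-\eps)$, and optimize in $t$ to get $t = \eps/(2(1-\eps))$ and the bound $(1-\eps)^{r/2}\epower{r\eps/2}$. This is at most $\epower{-r\eps^2/4}$ iff $\log(1-\eps) + \eps + \eps^2/2 \le 0$, which reduces to a calculus check: the left-hand side vanishes at $\eps = 0$ and has derivative $-\eps^2/(1-\eps) \le 0$ on $]0,1[$.

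The main obstacle is purely computational rather than conceptual: one has to choose the Chernoff parameter correctly in each case and then dispatch the two elementary scalar inequalities. Once the chi-squared MGF is in hand and the Legendre-type minimization is performed, both bounds reduce to monotonicity checks on smooth functions of a single variable.
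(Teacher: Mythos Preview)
Your argument is correct. The reduction to the chi-squared variable $Q=\norm{\xi-h}_{\Sigma'^{-1}}^2$, the two Chernoff bounds with the optimal exponents $t=\eps/2$ and $t=\eps/(2(1-\eps))$, and the two scalar inequalities are all sound; the derivative computations check out exactly as you wrote them.

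As for comparison with the paper: the paper does not actually prove this proposition. It states the result and refers the reader to Bogachev's monograph on Gaussian measures for a proof. Your approach is therefore not merely ``different'' but strictly more than what the paper offers: it is a short, self-contained derivation via the chi-squared moment generating function, whereas the paper treats the statement as a quotable concentration-of-measure fact. The advantage of your route is that it requires nothing beyond Markov's inequality and elementary calculus; the advantage of citing Bogachev is brevity and the connection to the broader Gaussian concentration literature, where sharper and more general inequalities are available.
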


The above proposition states that as the dimension $r$ of the
Gaussian measure increases, its mass concentrates in an
ellipsoidal shell of `mean-size' $\sqrt r$. It readily follows
that since $\eta(\bar x_\theta, \bar u_\theta)$ is a
$r$-dimensional Gaussian random vector, its mass concentrates
around a shell of size $\sqrt{r}$. Note that the bounds
corresponding to (i) and (ii) of Proposition~\ref{p:conc} in the
case of $\eta(\bar x_\theta, \bar u_\theta)$ are independent of
the optimization parameters $\theta$; of course the relative sizes
of the confidence ellipsoids change with $\theta$ (because the
mean and the covariance of $\eta(\bar x_\theta, \bar u_\theta)$
depend on $\theta$), but Proposition~\ref{p:conc} shows that the
size of the confidence ellipsoids grow quite rapidly with the
dimension of the noise and the length of the optimization horizon.
Intuitively one would expect the ellipsoidal constraint
approximation method to be more effective than the cruder
approximation by constraint separation.
Figure~\ref{figure:betagrowth} and Proposition~\ref{p:conc}
however suggest that this is not the case in general; for large
numbers of constraints (e.g. longer MPC prediction horizon) the
constraint separation method is the less conservative.

\subsubsection{Approximation via expectations}
\label{section:approxviaexp}
For any $r$-dimensional random vector
$\eta$, we have
$1-\mb P\bigl(\eta \le 0\bigr) = \mb
E\left[1-\prod_{i=1}^{r}\indic{]-\infty, 0]}(\eta_i)\right]$.
Using this fact one can arrive at conservative convex
approximations of the chance-constraint~(\ref{e:constr}) by
replacing the function in the expectation with appropriate
approximating functions. For $t_i>0$, $i=1,\ldots, r$, consider
$$
    \varphi(\eta) = \sum_{i=1}^{r}{\exp(t_i\eta_i)}.
$$
\begin{lemma}
\label{p:expcons} For any $r$-dimensional random vector $\eta$,
$\EE[\varphi(\eta)]\geq 1- \PP[\eta\leq 0]$.
\end{lemma}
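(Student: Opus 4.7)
The plan is to establish the inequality pointwise (i.e., $\omega$-by-$\omega$) and then take expectations, using the identity $1 - \PP(\eta \le 0) = \EE\bigl[1 - \prod_{i=1}^{r}\indic{]-\infty, 0]}(\eta_i)\bigr]$ already noted in the excerpt. Concretely, I would show that for every $\xi \in \RR^r$,
\[
    \sum_{i=1}^{r}\exp(t_i \xi_i) \;\ge\; 1 - \prod_{i=1}^{r}\indic{]-\infty, 0]}(\xi_i),
\]
which upon taking expectation with $\xi = \eta$ gives exactly $\EE[\varphi(\eta)] \ge 1 - \PP[\eta \le 0]$, as desired.

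To verify the pointwise inequality, I would split into two cases. First, if $\xi_i \le 0$ for all $i$, the product of indicators equals $1$, so the right-hand side is $0$; the left-hand side is a sum of positive exponentials and therefore nonnegative (in fact strictly positive), so the inequality holds trivially. Second, if there exists some index $j$ with $\xi_j > 0$, then $\prod_{i=1}^{r}\indic{]-\infty, 0]}(\xi_i) = 0$, so the right-hand side equals $1$. Since $t_j > 0$ and $\xi_j > 0$, we have $\exp(t_j \xi_j) > 1$, and all remaining terms $\exp(t_i \xi_i)$ are positive, so $\sum_{i=1}^{r}\exp(t_i \xi_i) > 1$. This exhausts the cases.

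There is essentially no obstacle here: the argument relies only on the nonnegativity and monotonicity of the exponential, together with the fact that $t_i > 0$. The only thing to be slightly careful about is that monotonicity of expectation is applicable, which is immediate provided $\EE[\varphi(\eta)]$ is well-defined (it is a sum of nonnegative terms, so the expectation exists in $[0,+\infty]$ and the inequality makes sense in the extended sense even when $\EE[\varphi(\eta)] = +\infty$). Hence the bound is meaningful regardless of integrability assumptions on $\eta$, and yields the claim.
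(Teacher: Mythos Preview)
Your proof is correct and follows exactly the same approach as the paper: establish the pointwise inequality $\varphi(\eta)\geq 1-\prod_{i=1}^{r}\indic{]-\infty, 0]}(\eta_i)$ and then take expectations. The paper simply asserts the pointwise bound without justification, whereas you supply the (easy) two-case verification and the remark on integrability; these are welcome additions but do not change the route.
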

\begin{proof}
For every fixed value of $\eta$, it holds that $\varphi(\eta)\geq
1-\prod_{i=1}^{r}\indic{]-\infty, 0]}(\eta_i)$. Hence
$\EE[\varphi(\eta)]\geq \EE[1-\prod_{i=1}^{r}\indic{]-\infty,
0]}(\eta_i)] =1- \PP[\eta\leq 0]$.
\end{proof}

\begin{proposition}
\label{proposition:GaussExp} Under Assumption~\ref{a:Gaussian},
for $\eta$ defined as in~\eqref{e:etadef}, it holds that
$$\EE\big[\varphi\big(\eta(\bar{x}_\theta,\bar{u}_\theta)\big)\big]= \sum_{i=1}^r \exp \Big(t_ih_{i,\theta} +\frac{t_i^2}{2} ||\bar\Sigma^{\frac{1}{2}}P_{i,\theta}||^2\Big).$$
As a consequence, under Assumption~\ref{a:convexobj} and for any
choice of $t_i>0$, $i=1,\ldots, r$, the problem
\begin{align*}
\inf_{\theta\in \Theta}\quad&\EE[V(\bar{x}_\theta,\bar{u}_\theta)] \\
\mathrm{subject~to}\quad&\eqref{e:ubardef},~\eqref{eq:cloopsys}~\mathrm{and}~\sum_{i=1}^r \exp \Big(t_ih_{i,\theta} +\frac{t_i^2}{2} ||\bar\Sigma^{\frac{1}{2}}P_{i,\theta}||^2\Big)\leq \alpha
\end{align*}
is a convex conservative approximation of
Problem~\eqref{e:objectiveCC}--\eqref{e:constr}.
\end{proposition}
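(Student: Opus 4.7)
The plan is to split the proof into two clean halves: first establishing the explicit formula for the expectation, then verifying that the resulting constraint yields a convex and conservative approximation.

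For the explicit formula, I would exploit the polytopic/affine structure recorded in~\eqref{e:appr2}, which gives the componentwise representation $\eta_i(\bar{x}_\theta,\bar{u}_\theta) = h_{i,\theta} + P_{i,\theta}^T\bar{w}$. Under Assumption~\ref{a:Gaussian}, each $\eta_i$ is then a scalar Gaussian random variable with mean $h_{i,\theta}$ and variance $P_{i,\theta}^T \bar{\Sigma} P_{i,\theta} = \|\bar{\Sigma}^{1/2}P_{i,\theta}\|^2$. By linearity of expectation, $\EE[\varphi(\eta)] = \sum_{i=1}^r \EE[\exp(t_i\eta_i)]$, and I would evaluate each term using the standard Gaussian moment generating formula $\EE[\exp(tX)] = \exp(t\mu + t^2\sigma^2/2)$ for $X\sim\mc N(\mu,\sigma^2)$. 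This immediately delivers the claimed closed-form expression.

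For the second half, I would verify convexity term by term. Since $\bar{G}$ and $\bar{d}$ enter affinely in $h_{i,\theta}$ and $P_{i,\theta}$, the quantity $\|\bar{\Sigma}^{1/2}P_{i,\theta}\|^2$ is a convex quadratic in $\theta$ (composition of a convex norm-squared with an affine map), and the linear term $t_i h_{i,\theta}$ is affine. Hence each exponent is a convex function of $\theta$, and composing with the monotone nondecreasing convex function $\exp(\cdot)$ preserves convexity; summing finitely many convex functions does too. Therefore the sublevel set defined by the constraint is convex in $\theta$.

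Finally, I would invoke Lemma~\ref{p:expcons} to get $\PP[\eta(\bar{x}_\theta,\bar{u}_\theta)\leq 0] \geq 1 - \EE[\varphi(\eta(\bar{x}_\theta,\bar{u}_\theta))]$, so imposing the new constraint $\EE[\varphi(\eta(\bar{x}_\theta,\bar{u}_\theta))]\leq \alpha$ implies $\PP[\eta\leq 0]\geq 1-\alpha$, i.e.\ the original chance constraint~\eqref{e:constr} holds; this gives conservativeness. Combined with Proposition~\ref{proposition:CostConvexity} under Assumption~\ref{a:convexobj}, which guarantees convexity of the objective, the overall problem is a convex conservative approximation of~\eqref{e:objectiveCC}--\eqref{e:constr}. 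There is no real obstacle here beyond bookkeeping: the main subtlety worth highlighting is that the convexity of $\|\bar{\Sigma}^{1/2}P_{i,\theta}\|^2$ in $\theta$ relies on $P_{i,\theta}$ being affine in both $\bar{G}$ and $\bar{d}$, which is precisely the structural property enforced by the disturbance-feedback parametrization~\eqref{e:parstruc}.
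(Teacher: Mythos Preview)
Your proposal is correct and matches the paper's argument in structure: Gaussian moment-generating function for the closed-form expression, convexity of each exponent $t_ih_{i,\theta}+\tfrac{t_i^2}{2}\norm{\bar\Sigma^{1/2}P_{i,\theta}}^2$ in~$\theta$, and Lemma~\ref{p:expcons} for conservativeness. The only cosmetic difference is that the paper establishes convexity of the constraint set by first taking logarithms and invoking the log-sum-exp convexity result from~\cite[Example~3.14]{ref:boyd04}, whereas you argue directly that a finite sum of exponentials of convex functions is convex; your route is slightly more elementary and equally valid.
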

\begin{proof}
It is easily seen that, for any $r$-dimensional Gaussian random
vector $\eta$ with mean $\mu$ and covariance matrix $\Sigma'$, and
any vector $c\in\RR^r$, $\mb E\bigl[\exp(c^T\eta)\bigr] =
\exp\bigl(c^T\mu+\frac{1}{2}c^T\Sigma' c\bigr)$. Let us now write
$\eta_\theta$ in place of $\eta(\bar x_\theta, \bar u_\theta)$ for
shortness. By the hypotheses on $\bar{w}$, in the light
of~\eqref{e:appr2}, $\eta_\theta$ is Gaussian with mean $h_\theta$
and covariance $P_\theta\bar\Sigma P_\theta^T$. Then, for a vector
$c_i\in\RR^r$ with zero entries except for a coefficient $t_i$ in
the $i$-th position,
$$\EE[\exp(c_i^T\eta_\theta)]=\exp\Big(c_i^T h_\theta+\frac{1}{2}c_i^T P_\theta\bar\Sigma P_\theta^T c_i\Big)=
\exp\Big(t_ih_{i,\theta}+\frac{t_i^2}{2}P_{i,\theta}^T\bar\Sigma
P_{i,\theta} \Big).$$ Summing up for $i=1,\ldots, r$ yields the
first result. In order to prove the second statement, note that
\begin{align*}
 \EE\bigl[\varphi(\eta_\theta)\bigr] \leq \alpha\iff &\log\EE\bigl[\varphi(\eta_\theta)\bigr] \leq \log \alpha \\ \iff &
\log\Biggl(\sum_{i=1}^{r} {\exp\Bigl(t_ih_{i,\theta}+\frac{t_i^2}{2}\norm{\bar\Sigma^{\frac{1}{2}}P_{i,\theta}}^2\Bigr)}\Biggr) \leq \log(\alpha).
\end{align*}
For each $i$,
$t_ih_{i,\theta}+\frac{t_i^2}{2}\norm{\bar\Sigma^{\frac{1}{2}}P_{i,\theta}}^2$
is a convex function of the optimization parameters $\theta$.
By~\cite[Example~3.14]{ref:boyd04}, given convex functions
$g_1(\theta),\ldots,g_r(\theta)$, the function $f(\theta) =
\log\bigl(e^{g_1(\theta)}+\cdots+e^{g_k(\theta)}\bigr)$ is itself
convex. It follows that $\log \EE\bigl[\varphi(\eta_\theta)\bigr]$
is a convex function of $\theta$ and that the constraint set
$$\{\theta\in\Theta:~\log \EE[\varphi(\eta_\theta)]\leq \log
\alpha\}=\{\theta\in\Theta:~\EE[\varphi(\eta_\theta)]\leq
\alpha\}$$ is convex. Finally, from Lemma~\ref{p:expcons}, if
$\EE[\varphi(\eta_\theta)]\leq \alpha$ then $\PP[\eta_\theta\leq
0]\geq  1- \EE[\varphi(\eta_\theta)]\geq 1-\alpha$. Together with
Assumption~\ref{a:convexobj}, this implies that the optimization
problem with constraint $\sum_{i=1}^r \exp \Big(t_ih_{i,\theta}
+\frac{t_i^2}{2}
||\bar\Sigma^{\frac{1}{2}}P_{i,\theta}||^2\Big)\leq \alpha$ in
place of~\eqref{e:constr} is a convex conservative approximation
of~\eqref{e:objectiveCC}--\eqref{e:constr}.
%
%
\end{proof}

This convex approximation of
Problem~\eqref{e:objectiveCC}--\eqref{e:constr} is obtained
in~\eqref{eq:objective-2}--\eqref{eq:constraint-2} by setting
$R=1$ and $\phi(\eta)=\varphi(\eta)-\alpha$. The result that the
approximation is conservative relies essentially on the fact that,
with this choice, $\phi(\eta)\geq \phi_{CC}(\eta)$ $\forall
\eta\in\RR^r$ (see Lemma \ref{p:expcons}). This result can be
generalized: Given any two functions $\phi',\phi'':\RR^r\to \RR$
such that $\phi'(\eta)\geq \phi''(\eta)$ $\forall \eta\in\RR^r$,
constraint~\eqref{eq:constraint-2} with $\phi=\phi'$ is more
conservative than the same constraint with $\phi=\phi''$. This
type of analysis can be exploited to compare different
probabilistic constraints and to minimize the conservatism of the
convex approximations with respect with the tunable parameters,
but is not fully pursued here.

\subsection{Ellipsoidal Constraint Functions}
\label{sec:ellipconstr}
Consider the constraint function
$$
    \eta(\bar x_\theta, \bar u_\theta) = \left(\begin{bmatrix}
        \bar x_\theta \\ \bar u_\theta
        \end{bmatrix}-\delta \right)^T \Xi\left(\begin{bmatrix}
        \bar x_\theta \\ \bar u_\theta
        \end{bmatrix}-\delta \right)-1,
$$
where $\Xi \geq 0$ and $\delta$ are given. Then the constraint
$\eta(\bar x_\theta, \bar u_\theta)\leq 0$ restricts the vector
$\begin{bmatrix} \bar x_\theta^T & \bar u_\theta^T
\end{bmatrix}^T$ to an ellipsoid with center $\delta$ and shape
determined by $\Xi$.
We now provide an approximation of the chance
constraint~\eqref{e:constr} that is a semi-definite program in the
optimization parameters $\theta = (\bar G, \bar d)$. Similar to
\S\ref{s:confellipsoids}, the idea is to ensure that the
$100(1-\alpha)\%$ confidence ellipsoid of $\bar w$ is such
that~\eqref{e:constr} holds. To this end, let
$$
    y_\theta = \begin{bmatrix}
        \bar x_\theta \\ \bar u_\theta
        \end{bmatrix} -\delta  = \begin{bmatrix}
        \bar Ax_0+\bar B(\bar G \bar w + \bar d ) + \bar D \bar w \\
         \bar G \bar w + \bar d
        \end{bmatrix} - \delta = h_\theta' + P_\theta' \bar w,
$$
where $h_\theta' = \begin{bmatrix}
\bar Ax_0+\bar B\bar d \\
 \bar d
\end{bmatrix} - \delta$ and $P'_\theta = \begin{bmatrix}
\bar B\bar G + \bar D \\
 \bar G
\end{bmatrix} $. 

\begin{proposition}
\label{prop:ellipLMI} Define
$S_\theta=\beta(\alpha)\Xi^{\frac{1}{2}}P'_\theta\bar{\Sigma}^{\frac{1}{2}}$,
with $\beta(\alpha)$ as in Proposition~\ref{prop:confellip}, and
$\xi_\theta = \Xi^{\frac{1}{2}}h_\theta'$. Then
\begin{equation}
\label{e:bigge}
    \begin{bmatrix}
        -\lambda + 1 & 0 & \xi_\theta^T\\
        0 & \lambda I & (S_\theta)^T\\
        \xi_\theta & S_\theta & I
    \end{bmatrix} \ge 0,\qquad \lambda>0,
\end{equation}
is a Linear Matrix Inequality (LMI) in the unknown parameters
$\theta$ and $\lambda$. If $(\theta,\lambda)$ is a solution
of~\eqref{e:bigge}, then $\theta$ satisfies~\eqref{e:constr}. As a
consequence, under Assumption~\ref{a:convexobj}, the problem
\begin{align*}
\inf_{\theta\in \Theta,\lambda}\quad&\EE[V(\bar{x}_\theta,\bar{u}_\theta)] \\
\mathrm{subject~to}\quad&\eqref{e:ubardef},~\eqref{eq:cloopsys}~\mathrm{and}~\eqref{e:bigge}
\end{align*}
is a convex conservative approximation of
Problem~\eqref{e:objectiveCC}--\eqref{e:constr}.
\end{proposition}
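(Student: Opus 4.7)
The plan is to follow the confidence-ellipsoid philosophy used for polytopic constraints in Section~\ref{s:confellipsoids}: first trap $\bar w$ inside a high-probability ellipsoid, then enforce the ellipsoidal constraint robustly over that set, and finally recast the resulting robust quadratic constraint as a single LMI via the S-procedure and a Schur complement.

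Concretely, because $\bar w\sim\mathcal N(0,\bar\Sigma)$, the quadratic form $\bar w^T\bar\Sigma^{-1}\bar w$ is $\chi^2$-distributed, so the set $\mathcal W_\alpha=\{\bar w:\bar w^T\bar\Sigma^{-1}\bar w\leq \beta(\alpha)^2\}$ has probability at least $1-\alpha$ (exactly as in Proposition~\ref{prop:confellip}; if $Nn<r$ use $Nn$ degrees of freedom as already noted). Therefore, if we can ensure that $y_\theta^T\Xi y_\theta\leq 1$ holds for \emph{every} $\bar w\in\mathcal W_\alpha$, then $\eta(\bar x_\theta,\bar u_\theta)\leq 0$ automatically holds with probability $\geq 1-\alpha$. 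Parametrising $\bar w\in\mathcal W_\alpha$ by $\bar w=\beta(\alpha)\bar\Sigma^{1/2}u$ with $\|u\|\leq 1$ and writing $y_\theta=h_\theta'+P_\theta'\bar w$, and noting that $y_\theta^T\Xi y_\theta=\|\Xi^{1/2}y_\theta\|^2$, this robust requirement becomes
\begin{equation*}
\|\xi_\theta+S_\theta u\|^2\leq 1\qquad\text{for all }u\text{ with }\|u\|\leq 1,
\end{equation*}
with the $\xi_\theta$ and $S_\theta$ defined in the proposition.

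Next I would invoke the (sufficient direction of the) S-procedure: if there exists $\lambda>0$ such that
\begin{equation*}
1-\|\xi_\theta+S_\theta u\|^2-\lambda\bigl(1-\|u\|^2\bigr)\geq 0\qquad\text{for all }u,
\end{equation*}
then the robust inequality above holds. Expanding and collecting in $u$, this pointwise inequality is equivalent to positive semidefiniteness of the symmetric matrix
\begin{equation*}
\begin{bmatrix}
1-\lambda-\xi_\theta^T\xi_\theta & -\xi_\theta^T S_\theta\\
-S_\theta^T\xi_\theta & \lambda I-S_\theta^T S_\theta
\end{bmatrix}\geq 0.
\end{equation*}
A Schur complement with respect to the trailing identity block then lifts the quadratic terms $\xi_\theta^T\xi_\theta$ and $S_\theta^T S_\theta$ into an affine block, yielding exactly~\eqref{e:bigge}. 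Because $h_\theta'$ is affine in $\bar d$ and $P_\theta'$ is affine in $\bar G$, the quantities $\xi_\theta$ and $S_\theta$ are affine in $\theta$, so~\eqref{e:bigge} is a bona fide LMI in $(\theta,\lambda)$. Convexity of the proposed optimisation problem then follows from Assumption~\ref{a:convexobj} and Proposition~\ref{proposition:CostConvexity}, and the feasibility chain above shows that it is a conservative approximation of Problem~\eqref{e:objectiveCC}--\eqref{e:constr}.

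The main obstacle is the S-procedure/Schur-complement step: one has to organise the algebra so that the final matrix inequality is genuinely affine in the decision variables, and one has to handle the case $\Xi\geq 0$ (rather than $\Xi>0$) carefully when using the factorisation $\Xi=\Xi^{1/2}\Xi^{1/2}$. The probabilistic part and the convexity conclusion are then straightforward given the earlier results.
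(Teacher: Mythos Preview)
Your proposal is correct and follows essentially the same route as the paper: confine $\bar w$ to its $100(1-\alpha)\%$ confidence ellipsoid, reparametrise via $\bar v=\bar\Sigma^{-1/2}\bar w/\beta(\alpha)$ to obtain the robust condition $\sup_{\|\bar v\|\le 1}\|\xi_\theta+S_\theta\bar v\|^2\le 1$, apply the S-procedure to get the $2\times 2$ block matrix inequality, and then a Schur complement to reach~\eqref{e:bigge}. The only minor difference is that the paper invokes the lossless S-procedure (via \cite[p.~653]{ref:boyd04}) and hence obtains an \emph{equivalence} between the robust quadratic condition and the existence of $\lambda\ge 0$ satisfying the matrix inequality, whereas you use only the sufficient direction; since the proposition only claims that feasibility of~\eqref{e:bigge} implies~\eqref{e:constr}, your version is enough.
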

\begin{proof}
The inequality~\eqref{e:constr} may be equivalently represented as
\begin{align}
\label{e:ellips2}
    \mb P\bigl(y_\theta^T\Xi y_\theta & - 1 \leq 0\bigr) \geq 1-\alpha\nonumber\\
    \iff & \;\; \mb P\bigl(\|\Xi^\frac{1}{2}y_\theta\|^2-1\leq0\bigr) \geq 1-\alpha\nonumber\\
    \iff & \;\; \mb P\bigl(\|\Xi^\frac{1}{2}(h'_\theta + P'_\theta\bar w)\|^2-1\leq0\bigr) \geq 1-\alpha\nonumber\\
    \iff & \;\; \mb P\bigl(\|\xi_\theta + S'_\theta\bar w\|^2-1\leq0\bigr) \geq 1-\alpha,
\end{align}
where $S'_\theta = \Xi^{\frac{1}{2}}P'_\theta$. Since
$\bar{w}\sim\mc N(0,\bar\Sigma)$, one can compute $\beta(\alpha)>
0$ such that $\norm {\bar\Sigma^{-1/2}\bar w}^2\leq
\beta(\alpha)^2$ specifies the required $100(1-\alpha)\%$
confidence ellipsoid of $\bar w$. Hence, we need to ensure that
$\norm{\bar\Sigma^{-1/2}\bar w}^2\leq \beta(\alpha)^2 \Rightarrow
\norm{\xi_\theta + S'_\theta\bar w}^2\leq1$. This is equivalent to
$$
\sup_{\norm{\bar\Sigma^{-1/2}\bar w}^2\leq \beta(\alpha)^2} \norm{\xi_\theta + S'_\theta\bar w}^2\leq 1\quad
    \iff \quad \sup_{\norm{\bar v}^2\leq 1} \norm{\xi_\theta+S_\theta\bar v}^2\leq 1.
$$
It follows from~\cite[p.~653]{ref:boyd04} that $\sup_{\norm{\bar
v} \le 1} \norm{\xi_\theta + S_\theta\bar v}^2 \le 1$ if and only
if there exists $\lambda \ge 0$ such that
\[
    \begin{bmatrix}
        -\xi_\theta^T \xi_\theta - \lambda + 1 & \xi_\theta^T S_\theta\\
        (S_\theta)^T \xi_\theta & \lambda I - (S_\theta)^T S_\theta
    \end{bmatrix} \ge 0.
\]
Using Schur complements the last relation can be rewritten
equivalently as~\eqref{e:bigge}. Therefore, any solution
of~\eqref{e:bigge} implies~\eqref{e:constr}. To verify
that~\eqref{e:bigge} is an LMI, note that $S_\theta$ and
$\xi_\theta$ are affine in the optimization variables. Together
with the assumed convexity of
$\EE[V(\bar{x}_\theta,\bar{u}_\theta)]$, the last statement of the
proposition follows.
\end{proof}


\section{Integrated chance constraints}
\label{sec:icc} In this section we focus on the problem
\begin{align}
\inf_{\theta\in \Theta}\quad&\EE[V(\bar{x}_\theta,\bar{u}_\theta)] \label{eq:objectiveICC} \\
\textrm{subject to}\quad&\eqref{e:ubardef},~\eqref{eq:cloopsys}\textrm{ and} \\
&J_i(\theta)\leq \beta_i,~i=1,\ldots, r \label{eq:genICC}
\end{align}
where, for $i=1,\ldots, r$,
$J_i(\theta)\triangleq \EE[\varphi_i \circ
\eta_i(\bar{x}_\theta,\bar{u}_\theta)]$, functions
$\varphi_i:\RR\to \RR$ are measurable and the $\beta_i>0$ are
fixed
parameters. 
This problem corresponds to
Problem~\eqref{eq:objective-2}--\eqref{eq:constraint-2} when
setting $R=r$ and $\phi_i(\eta)=\varphi_i(\eta)-\beta_i$, with
$i=1,\ldots, r$. For the choice
\begin{equation}
\label{eq:ramp}
\varphi_i(z)=\begin{cases}
0,& \textrm{if }z\leq 0, \\
z, &\textrm{otherwise},
\end{cases}
\end{equation}
constraints of the form~(\ref{eq:genICC}) are known as integrated
chance constraints~\cite{haneveldICC,haneveldICC06}. In fact, one
may write (dropping the dependence of $\eta_i$ on $\bar{x}_\theta$
and $\bar{u}_\theta$ to simplify the notation)
$$
    \EE[\varphi_i(\eta_i)]=\EE\left[\int_{0}^{+\infty}\indic{[0,\eta_i )}(s) \,\mrm ds\right]=\int_{0}^{+\infty}\PP\bigl(\eta_i>s\bigr)\,\mrm ds,
$$
where $\indic{S}(\cdot)$ is the indicator function of set $S$ and
the second equality follows from Tonelli's theorem~\cite[Theorem
4.4.5]{dudley}. Therefore, constraint~\eqref{eq:genICC} is
equivalent to
\begin{equation}
\label{eq:icc}
    \int_0^{+\infty}\PP\bigl(\eta_i(\bar{x}_\theta,\bar{u}_\theta)>s\bigr)\,\mrm ds \leq \beta_i,
\end{equation}
whence the name integrated chance constraint. Note that
$\varphi_i$ plays the role of a penalty (or barrier) function that
penalizes violations of the inequality
$\eta_i(\bar{x},\bar{u})\leq 0$, and $\beta_i$ is a maximum
allowable cost in the sense of~(\ref{eq:icc}). Of course,
different choices of $\varphi_i$ will not guarantee the
equivalence between~(\ref{eq:genICC}) and~(\ref{eq:icc}). However,
they may be useful in deriving other quantitative chance
constraint-type approximations.

\subsection{Convexity of Integrated Chance Constraints}

We now establish sufficient conditions on the $\eta_i$ and
$\varphi_i$ for the convexity of the constraint set
\begin{equation}
\label{eq:Ficc}
\F_{\rm ICC}\triangleq \{\theta:~J_i(\theta)\leq \beta_i,~i=1,\ldots, r\}.
\end{equation}

The result is again a consequence of
Proposition~\ref{thm:phietaconvex}.

\begin{proposition}
\label{thm:etaphiconditions} Let the mappings
$\eta_i:\RR^{(N+1)n\times Nm}\to \RR$ be measurable and convex,
and let the $\varphi_i:\RR\to\RR$ be measurable, monotone
nondecreasing and convex. Assume that the $J_i(\theta)$ are finite
for all $\theta$. Then each $J_i(\theta)$ is a convex function of
$\theta$ and $\F_{\rm ICC}$ is a convex set. As a consequence,
under
Assumption~\ref{a:convexobj},~\eqref{eq:objectiveICC}--\eqref{eq:genICC}
is a convex optimization problem.
\end{proposition}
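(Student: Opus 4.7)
The plan is to reduce the claim to Proposition~\ref{thm:phietaconvex} applied component by component, exactly as was done in Proposition~\ref{proposition:CostConvexity}. Concretely, for each $i\in\{1,\ldots,r\}$ I would set
\[
\gamma_i(\omega,\theta) \triangleq \eta_i\bigl(\bar{x}_\theta(\omega),\bar{u}_\theta(\omega)\bigr), \qquad \varphi \triangleq \varphi_i,
\]
so that $J_i(\theta) = \EE[\varphi_i \circ \gamma_i(\omega,\theta)]$ fits the template of Proposition~\ref{thm:phietaconvex}.

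The first step is to verify the hypotheses of that proposition for each $i$. Fix $\omega$; the map $\theta \mapsto (\bar{x}_\theta(\omega),\bar{u}_\theta(\omega))$ is affine (as already noted in the proof of Proposition~\ref{proposition:CostConvexity}, since $(\bar{x}_\theta,\bar{u}_\theta) = (\bar{A}x_0 + \bar{B}(\bar{G}\bar{w}+\bar{d}) + \bar{D}\bar{w},\, \bar{G}\bar{w}+\bar{d})$). Post-composing an affine map with the convex $\eta_i$ yields a convex function of $\theta$, so hypothesis~(i) holds almost surely. Hypothesis~(ii) is given by assumption on $\varphi_i$, and hypothesis~(iii) is the stated finiteness of $J_i(\theta)$. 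Proposition~\ref{thm:phietaconvex} then directly delivers the convexity of each $J_i:\Theta\to\RR$.

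Next I would assemble $\F_{\rm ICC}$: each set $\{\theta \in \Theta : J_i(\theta) \leq \beta_i\}$ is a sublevel set of a convex function, hence convex, and $\F_{\rm ICC}$ is their finite intersection, thus convex. Combined with the convexity of $\Theta$ itself (a linear space), the admissible set for problem~\eqref{eq:objectiveICC}--\eqref{eq:genICC} is convex.

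Finally, under Assumption~\ref{a:convexobj}, Proposition~\ref{proposition:CostConvexity} guarantees that the objective $\EE[V(\bar{x}_\theta,\bar{u}_\theta)]$ is convex in $\theta$, so minimizing a convex function over the convex set $\F_{\rm ICC}$ is a convex program, completing the argument. There is no real obstacle here: the work was done once and for all in Proposition~\ref{thm:phietaconvex}, and the only thing to check is that the affinity of $\theta \mapsto (\bar{x}_\theta,\bar{u}_\theta)$ allows the convexity of $\eta_i$ in $(\bar{x},\bar{u})$ to transfer to convexity of $\gamma_i(\omega,\cdot)$ in $\theta$ almost surely — which is immediate.
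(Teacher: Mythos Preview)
Your proposal is correct and matches the paper's proof essentially step for step: both reduce to Proposition~\ref{thm:phietaconvex} applied componentwise via the affinity of $\theta\mapsto(\bar x_\theta,\bar u_\theta)$, then take the intersection of the convex sublevel sets $\{\theta:J_i(\theta)\le\beta_i\}$, and finally invoke Assumption~\ref{a:convexobj} (through Proposition~\ref{proposition:CostConvexity}) for the objective.
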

\begin{proof} Fix $\omega\in\Omega$ arbitrarily. Since the mapping $\theta\mapsto
\big(\bar{x}_\theta(\omega),\bar{u}_\theta(\omega)\big)$ is affine
and $(\bar{x},\bar{u})\mapsto \eta_i(\bar{x},\bar{u})$ is convex
by assumption, their composition $\theta\mapsto
\eta_i\big(\bar{x}(\omega),\bar{u}(\omega)\big)$ is a convex
function of $\theta$.
%
%
Using the assumption that $\varphi_i$ is monotone nondecreasing
and convex, we may apply Proposition~\ref{thm:phietaconvex} with
$\gamma(\omega,\theta)=\eta_i\big(\bar{x}_\theta(\omega),\bar{u}_{\theta}(\omega)\big)$
and $\varphi_i$ in place of $\varphi$ to conclude that
$J_i(\theta)=\EE[\varphi_i\circ\gamma(\omega,\theta)]$ is convex.
Hence, for any choice of $\beta_i$, the set $\F_{i}\triangleq
\{\theta:~J_i(\theta)\leq \beta_i\}$ is convex. Since $\F_{\rm
ICC}=\bigcap_{i=1}^r \F_i$, the convexity of $\F_{\rm ICC}$
follows. Together with Assumption~\ref{a:convexobj}, this proves
that~\eqref{eq:objectiveICC}--\eqref{eq:genICC} is a convex
optimization problem.
\end{proof}%
It is worth noting that the function $\varphi(\cdot)$ of
Section~\ref{section:approxviaexp} satisfies analogous
monotonicity and convexity assumptions with respect to each of the
$\eta_i$, with $i=1,\ldots, r$. Unlike those of
Section~\ref{sec:cc}, this convexity result is independent of the
probability distribution of $\bar{w}$. By virtue of the
alternative assumptions (iii$'$) and (iii$''$) of
Proposition~\ref{thm:phietaconvex}, the requirement that
$J(\theta)$ be finite for all $\theta$ may be relaxed. A
sufficient requirement is that there exist no two values $\theta$
and $\theta'$ such that $J(\theta)=+\infty$ and
$J(\theta')=-\infty$. In particular, provided measurable and
convex $\eta_i$, definition~(\ref{eq:ramp}) satisfies all the
requirements of Proposition~\ref{thm:etaphiconditions}.

\begin{example}\rm
The (scalar) polytopic constraint function:
$$\eta(\bar x_\theta,\bar u_\theta)=T_x\bar x_\theta +T_u \bar u_\theta-y$$
fulfills the hypotheses of Proposition~\ref{thm:etaphiconditions}. Hence, the corresponding integrated chance constraint is convex.
\end{example}

\begin{example}\rm
\label{ex:followon1} Following Example~\ref{thm:examples}, an
interesting case is that of ellipsoidal constraints. For an
$\big((N+1)n+Nm\big)$-size positive-semidefinite real matrix $\Xi$
and a vector $\delta\in\RR^{(N+1)n+Nm}$, define
\begin{equation}
\label{eq:ellipeta}
\eta(\bar{x}_\theta,\bar{u}_\theta)\triangleq \left(\begin{bmatrix} \bar{x}_\theta \\ \bar{u}_\theta\end{bmatrix}-\delta\right)^T\Xi
\left(\begin{bmatrix} \bar{x}_\theta \\ \bar{u}_\theta\end{bmatrix}-\delta\right)-1.
\end{equation}
This is a convex function of the vector $\begin{bmatrix}
\bar{x}_\theta^T  & \bar{u}_\theta^T\end{bmatrix}^T$ (it is the
composition of the convex mapping $\xi^T\Xi\xi$, $\Xi \geq 0$,
with the affine mapping $\xi=\begin{bmatrix} \bar{x}_\theta^T &
\bar{u}_\theta^T\end{bmatrix}^T-\delta$) and hence
Proposition~\ref{thm:etaphiconditions} applies. \end{example}
\begin{remark}\rm
A problem setting similar to Example~\ref{ex:followon1} with
quadratic expected-type cost function and ellipsoidal constraints
has been adopted in~\cite{primbs}, where hard constraints are
relaxed to expected-type constraints of the form
$\EE[\eta(\bar{x},\bar{u})]\leq \beta$. This formulation can be
seen as a special case of integrated chance constraints with
$\varphi(z)=z$ for all $z$. The choice of $\varphi$ within a large
class of functions is an extra degree of freedom provided by our
framework that may be exploited to establish tight bounds on the
probability of violating the original hard constraints, see
Lemma~\ref{p:expcons} for an example.
\end{remark}

\subsection{Numerical solution of optimization problems with ICC}

Even though ICC problems are convex in general, deriving efficient
algorithms to solve them is still a major
challenge~\cite{haneveldICC,haneveldICC06}. For certain ICCs, it
is possible however to derive explicit expressions for the
gradients of the constraint function. Provided the cost has a
simple (e.g. quadratic) form, this allows one to implement
standard algorithms (e.g. interior point
methods~\cite{ref:boyd04}) for the solution of the optimization
problem. \par Let $\bar{w}$ satisfy Assumption~\ref{a:Gaussian}
with, for simplicity, $\bar \Sigma$ equal to the identity matrix,
i.e. $\bar w\sim \mathcal N(0,I)$. Consider the problem with one
scalar constraint (the generalization to multiple (joint)
constraints is straightforward):
\begin{equation}
\label{eq:casestudy}
\begin{aligned}
\min_{\theta} \quad&\begin{bmatrix}\bar{x}_\theta^T & \bar{u}_\theta^T \end{bmatrix} M \begin{bmatrix}\bar{x}_\theta \\ \bar{u}_\theta \end{bmatrix} \\
\textrm{subject to}\quad &\eqref{e:ubardef},~\eqref{eq:cloopsys}\textrm{ and }\EE [\varphi(T_x\bar x_\theta +T_u \bar u_\theta-y)]\leq \beta,
 \end{aligned}
 \end{equation}
where $\varphi$ is defined as in~\eqref{eq:ramp}.
\begin{lemma}
Let $z$ be a Gaussian random variable with mean $\mu$ and variance
$\sigma^2>0$. Then
$$\mb E[\varphi(z)] = \sigma
g\Bigl(\frac{\mu}{\sigma}\Bigr)$$ where $$g(x)
=\frac{x}{2}\erfc\Bigl(\frac{-x}{\sqrt 2}\Bigr)+
\frac{1}{\sqrt{2\pi}}\exp \Bigl(\frac{-x^2}{2}\Bigr)$$ and
$\erfc(x) = 1 - \erf(x)$
is the standard complementary error function.
\end{lemma}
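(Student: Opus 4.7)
The plan is a direct computation: write out $\EE[\varphi(z)]$ as an integral against the Gaussian density of $z$, use that $\varphi(z)=z\cdot\indic{[0,\infty)}(z)$ to restrict the domain of integration to $[0,\infty)$, then standardize via the substitution $u=(z-\mu)/\sigma$ and reduce everything to the two elementary integrals that make up $g$.

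Concretely, first I would write
\[
\EE[\varphi(z)] \;=\; \int_0^{+\infty} z\cdot \frac{1}{\sqrt{2\pi}\,\sigma}\exp\Bigl(-\frac{(z-\mu)^2}{2\sigma^2}\Bigr)\,\mrm dz,
\]
which is well-defined since $\sigma>0$ and the integrand is a rapidly decaying Gaussian moment. Next, I would change variables $u=(z-\mu)/\sigma$, giving $z=\sigma u+\mu$ and lower limit $-\mu/\sigma$, which yields
\[
\EE[\varphi(z)] \;=\; \sigma\int_{-\mu/\sigma}^{+\infty} u\,\frac{\epower{-u^2/2}}{\sqrt{2\pi}}\,\mrm du \;+\; \mu\int_{-\mu/\sigma}^{+\infty} \frac{\epower{-u^2/2}}{\sqrt{2\pi}}\,\mrm du.
\]

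The first integral has the explicit antiderivative $-\frac{1}{\sqrt{2\pi}}\epower{-u^2/2}$, so it evaluates to $\frac{1}{\sqrt{2\pi}}\exp\bigl(-\mu^2/(2\sigma^2)\bigr)$. For the second, I would recall that $\int_a^{+\infty}\frac{1}{\sqrt{2\pi}}\epower{-u^2/2}\,\mrm du = \tfrac{1}{2}\erfc(a/\sqrt{2})$, so with $a=-\mu/\sigma$ this gives $\tfrac{1}{2}\erfc\bigl(-\mu/(\sigma\sqrt{2})\bigr)$. Assembling,
\[
\EE[\varphi(z)] \;=\; \frac{\sigma}{\sqrt{2\pi}}\exp\Bigl(-\frac{\mu^2}{2\sigma^2}\Bigr) + \frac{\mu}{2}\erfc\Bigl(-\frac{\mu}{\sigma\sqrt{2}}\Bigr).
\]

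Finally, I would factor $\sigma$ and set $x=\mu/\sigma$ to recognise this as $\sigma\bigl[\tfrac{x}{2}\erfc(-x/\sqrt{2}) + \tfrac{1}{\sqrt{2\pi}}\exp(-x^2/2)\bigr] = \sigma g(\mu/\sigma)$, finishing the proof. There is really no hard step here: it is a textbook calculation, and the only mild subtlety is the bookkeeping of signs in $\erfc(-x/\sqrt{2})$ and keeping track of the $\sigma$ that comes out of the Jacobian (yielding the linear prefactor of the first term) versus the $\sigma$ that is absorbed into the standardised density (yielding the multiplicative $\sigma$ in the final answer).
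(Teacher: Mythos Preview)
Your proof is correct and follows essentially the same route as the paper's own proof: write the expectation as an integral over $[0,\infty)$ against the Gaussian density, standardize, split into two elementary integrals, and identify the result with $\sigma g(\mu/\sigma)$. The only cosmetic difference is that the paper uses the substitution $y=(t-\mu)/(\sqrt{2}\sigma)$ (so that the integrand involves $\epower{-y^2}$ and matches the $\erf$ definition directly), whereas you use $u=(z-\mu)/\sigma$ and the standard normal density; both are equivalent one-line computations.
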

\begin{proof}
Since $z\sim \mathcal N(\mu, \sigma^2)$ it holds that $$\mb
E[\varphi(z)] =
\frac{1}{\sqrt{2\pi\sigma^2}}\int_{0}^{\infty}t\exp\left(-\frac{(t-\mu)^2}{2\sigma^2}\right)\;dt.$$
By the change of variable $y= \frac{t-\mu}{\sqrt{2}\sigma}$ one
gets
\begin{align*}
\mb E[\varphi(z)]=&\,\, \frac{1}{\sqrt{\pi}}\int_{\frac{-\mu}{\sqrt{2}\sigma}}^{\infty} (\mu+\sqrt{2}\sigma y)\exp(-y^2)dy \\
=&\,\,\frac{\mu}{\sqrt{\pi}}\int_{\frac{-\mu}{\sqrt{2}\sigma}}^{\infty} \exp(-y^2)dy + \sigma\sqrt{\frac{2}{\pi}} \int_{\frac{-\mu}{\sqrt{2}\sigma}}^{\infty} y\exp(-y^2)dy\\
=&\,\,\frac{\mu}{2}\erfc\left(\frac{-\mu}{\sqrt{2}\sigma}\right)+\frac{\sigma}{\sqrt{2\pi}}\exp\left(-\frac{\mu^2}{2\sigma^2}\right),
\end{align*}
which is equal to $\sigma g(\frac{\mu}{\sigma})$.
\end{proof}
Simple calculations and the application of this lemma yield the following result.
\begin{proposition}
Problem~\eqref{eq:casestudy} is equivalent to
\begin{equation}
\begin{aligned}
\textrm{minimize} \quad& h_1(\bar{d})+h_2(\bar{G}) \\
\textrm{subject to} \quad
&\sigma g\Bigl(\frac{\mu}{\sigma}\Bigr) \leq \beta
 \end{aligned}
 \end{equation}
 where
\begin{align*}
\mu =& (T^x\bar Ax_0-\bar y)+(T^x \bar B + T^u)\bar d, \\
\sigma =& ||(T^x\bar D + (T^x\bar B + T^u)\bar G)^T||_2, \\
h_1 (\bar d) =& \begin{bmatrix} (\bar{A}x_0+\bar{B}\bar d)^T & \bar d^T
\end{bmatrix}
M
\begin{bmatrix} \bar{A}x_0+\bar{B}\bar d \\ \bar d
\end{bmatrix}, \\
h_2(\bar G) =& \text{Tr}\Biggl(\begin{bmatrix} (\bar B \bar G + \bar D)^T & \bar G^T
\end{bmatrix}
M
\begin{bmatrix} \bar B \bar G + \bar D \\ \bar G
\end{bmatrix} \Biggr).
\end{align*}
\end{proposition}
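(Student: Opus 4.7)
The plan is to derive both the equivalent constraint and the equivalent cost by exploiting the affine-in-$\theta$ structure of $(\bar{x}_\theta,\bar{u}_\theta)$ and the fact that $\bar w\sim\mathcal N(0,I)$, then apply the preceding lemma to the scalar ICC expression. Throughout, I will use the decompositions $\bar u_\theta = \bar G\bar w + \bar d$ and $\bar x_\theta = (\bar A x_0 + \bar B\bar d) + (\bar B\bar G+\bar D)\bar w$.

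First I would handle the constraint. The argument of $\varphi$, namely $z_\theta\triangleq T^x\bar x_\theta + T^u\bar u_\theta - y$, is an affine function of the Gaussian vector $\bar w$ and is therefore scalar Gaussian. Substituting the expressions for $\bar x_\theta$ and $\bar u_\theta$ gives $z_\theta = h_\theta + P_\theta\bar w$ with $h_\theta = (T^x\bar A x_0 - y) + (T^x\bar B + T^u)\bar d$ and $P_\theta = T^x\bar D + (T^x\bar B + T^u)\bar G$. Hence the mean is $\mu = h_\theta$ and, using $\bar\Sigma = I$, the variance is $\sigma^2 = P_\theta P_\theta^T = \|P_\theta^T\|_2^2$, matching the formulas in the statement. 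Assuming $\sigma>0$, the previous lemma then yields $\mathbb E[\varphi(z_\theta)] = \sigma g(\mu/\sigma)$, so the ICC becomes exactly $\sigma g(\mu/\sigma)\leq\beta$.

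Next I would handle the objective. Writing
\[
\begin{bmatrix}\bar x_\theta \\ \bar u_\theta\end{bmatrix}
= a(\bar d) + C(\bar G)\bar w, \qquad
a(\bar d) \triangleq \begin{bmatrix}\bar A x_0+\bar B\bar d \\ \bar d\end{bmatrix},\quad
C(\bar G) \triangleq \begin{bmatrix}\bar B\bar G+\bar D \\ \bar G\end{bmatrix},
\]
the expected quadratic form expands to
\[
\mathbb E\bigl[(a(\bar d)+C(\bar G)\bar w)^T M (a(\bar d)+C(\bar G)\bar w)\bigr] = a(\bar d)^T M a(\bar d) + 2\,a(\bar d)^T M C(\bar G)\mathbb E[\bar w] + \mathbb E[\bar w^T C(\bar G)^T M C(\bar G)\bar w].
\]
Since $\mathbb E[\bar w]=0$ the cross term vanishes, and since $\bar\Sigma = I$ the quadratic-in-$\bar w$ expectation reduces via the standard identity $\mathbb E[\bar w^T K \bar w] = \text{Tr}(K)$ to $\text{Tr}(C(\bar G)^T M C(\bar G))$. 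This produces the desired split $h_1(\bar d) + h_2(\bar G)$, where the two terms depend on disjoint blocks of $\theta$.

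The only real obstacle is bookkeeping: one must keep the $(\bar x, \bar u)$ block ordering consistent with $M$, verify that $(T^x\bar B+T^u)$ is the correct coefficient of $\bar d$ (resp.\ $\bar G$) after combining the $\bar x_\theta$ and $\bar u_\theta$ contributions, and confirm that $\sigma>0$ so the lemma applies (if $\sigma=0$ the constraint degenerates to $\mu_+\leq\beta$, which is the obvious limit of $\sigma g(\mu/\sigma)$). Combining the two reformulations then gives the equivalence claimed.
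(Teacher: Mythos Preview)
Your proposal is correct and follows exactly the approach the paper intends: the paper's own proof is simply the one-line remark ``Simple calculations and the application of this lemma yield the following result,'' and you have spelled out precisely those calculations---the affine decomposition of $z_\theta$ into mean $\mu$ and variance $\sigma^2$ (so the lemma applies), and the expansion of the expected quadratic form using $\mathbb E[\bar w]=0$ and $\mathbb E[\bar w\bar w^T]=I$ to obtain the $h_1+h_2$ split. Nothing is missing.
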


Note that expectations have been integrated out. Now it is
possible to put the problem in a standard form for numerical
optimization. Let $\bar{G}_i$ and $\bar{D}_i$ be the $i$-th column
of $\bar{G}$ and $\bar{D}$, respectively. Redefine the
optimization variable $\theta$ as the vector $\bar{\theta}=
\begin{bmatrix}\bar d^T & \bar G_1^T & \cdots & \bar G_{Nn}^T
\end{bmatrix}^T$. Define $e$, $f$, $v$ and $L$ by
\begin{align*}
\mu = (T^x\bar Ax_0-\bar y)+(T^x \bar B + T^u)\bar d =&e+f^T\bar{\theta}, \\
 (T^x\bar D + (T^x\bar B + T^u)\bar G)^T =&v+L\bar{\theta}.
\end{align*}
\begin{corollary}
Problem~\eqref{eq:casestudy} is equivalent to
\begin{align*}
\text{minimize } \quad f_0(\bar{\theta})= &g_0(\bar{\theta}) + \sum_{i=1}^{Nn} g_i(\bar{\theta}) \\
\text{subject to } \quad H\bar{\theta} = &0, \\
f_1(\bar{\theta})\leq &0
\end{align*}
where the matrix $H$ in the equality constraint accounts for the
causal structure of $\bar{G}$, while
\begin{align*}
g_0(\bar{\theta})
=&\Bigl(\begin{bmatrix} \bar A x_0 \\ 0\end{bmatrix} + \begin{bmatrix} \bar B \\ I\end{bmatrix} H_0 \bar{\theta} \Bigr)^T M \Bigl(\begin{bmatrix} \bar A x_0 \\ 0\end{bmatrix} + \begin{bmatrix} \bar B \\ I\end{bmatrix} H_0 \bar{\theta} \Bigr), \\
g_i(\bar{\theta})
=&\Bigl(\begin{bmatrix} \bar D_i \\ 0\end{bmatrix} + \begin{bmatrix} \bar B \\ I\end{bmatrix} H_i \bar{\theta} \Bigr)^T M \Bigl(\begin{bmatrix} \bar D_i \\ 0\end{bmatrix} + \begin{bmatrix} \bar B \\ I\end{bmatrix} H_i \bar{\theta} \Bigr), \\
f_1(\bar{\theta}) =& \norm{v+L\bar{\theta}}g\Bigl(\frac{e+f^T\bar{\theta}}{\norm{v+L\bar{\theta}}}\Bigr)-\beta \leq 0,
\end{align*}
and $H_0$ and $H_i$ are selection matrices such that
$H_0\bar\theta=\bar d$ and $H_i\bar\theta=\bar G_i$.
\end{corollary}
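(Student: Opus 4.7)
The plan is to take the reduction in the preceding proposition as the starting point and to rewrite its cost and constraint as functions of the stacked vector $\bar{\theta}=\begin{bmatrix}\bar{d}^T & \bar{G}_1^T & \cdots & \bar{G}_{Nn}^T\end{bmatrix}^T$. By construction the selection matrices satisfy $H_0\bar{\theta}=\bar{d}$ and $H_i\bar{\theta}=\bar{G}_i$. The vectorisation itself does not remember the lower-triangular block structure of $\bar{G}$ required by~\eqref{e:parstruc}, so I would reinstate it by appending a linear equation $H\bar{\theta}=0$ whose rows select exactly those entries of $\bar{\theta}$ that correspond to the on- and above-diagonal blocks of $\bar{G}$. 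This recovers the admissibility requirement $\theta\in\Theta$ in the new coordinates.

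Rewriting the probabilistic constraint $\sigma g(\mu/\sigma)\leq\beta$ is immediate: by the definitions of $e,f,v,L$ stated just before the corollary, $\mu=e+f^T\bar{\theta}$ and $\sigma=\norm{v+L\bar{\theta}}$, so the constraint becomes $f_1(\bar{\theta})\leq 0$ after direct substitution into the formula for $f_1$.

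The cost requires a little more bookkeeping, and I would treat $h_1(\bar{d})$ and $h_2(\bar{G})$ separately. The identity $h_1(\bar{d})=g_0(\bar{\theta})$ follows from $\begin{bmatrix}\bar{A}x_0\\ 0\end{bmatrix}+\begin{bmatrix}\bar{B}\\ I\end{bmatrix}H_0\bar{\theta}=\begin{bmatrix}\bar{A}x_0+\bar{B}\bar{d}\\ \bar{d}\end{bmatrix}$ together with the definition of $g_0$. For $h_2(\bar{G})=\mrm{Tr}(W^T M W)$ with $W=\begin{bmatrix}\bar{B}\bar{G}+\bar{D}\\ \bar{G}\end{bmatrix}$, I would apply the column-wise identity $\mrm{Tr}(W^T M W)=\sum_{i=1}^{Nn} W_i^T M W_i$, where $W_i$ denotes the $i$-th column of $W$. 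Since the $i$-th column of $\bar{B}\bar{G}+\bar{D}$ equals $\bar{B}\bar{G}_i+\bar{D}_i$, we obtain $W_i=\begin{bmatrix}\bar{D}_i\\ 0\end{bmatrix}+\begin{bmatrix}\bar{B}\\ I\end{bmatrix}H_i\bar{\theta}$, hence $W_i^T M W_i=g_i(\bar{\theta})$, and summing yields $h_2(\bar{G})=\sum_{i=1}^{Nn}g_i(\bar{\theta})$. Adding the two pieces gives $f_0(\bar{\theta})=h_1(\bar{d})+h_2(\bar{G})$, which completes the equivalence. The only mildly nontrivial step is the explicit assembly of the selection matrix $H$ encoding causality; beyond that, the argument is essentially a matter of matching notation, and no probabilistic reasoning is needed since the Gaussian expectation was already integrated out in the preceding proposition.
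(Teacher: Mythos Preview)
Your proposal is correct and matches the paper's approach: the paper states this corollary without proof, treating it as a direct notational rewriting of the preceding proposition once $\bar\theta$, $e$, $f$, $v$, $L$ and the selection matrices are introduced. Your argument simply fills in the bookkeeping the paper leaves implicit, and the key step you single out---the column-wise trace identity $\mathrm{Tr}(W^T M W)=\sum_i W_i^T M W_i$ applied to $W=\begin{bmatrix}\bar B\bar G+\bar D\\ \bar G\end{bmatrix}$---is exactly what is needed to pass from $h_2(\bar G)$ to $\sum_{i=1}^{Nn} g_i(\bar\theta)$.
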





We conclude the section by documenting the expressions of the
gradient and the Hessian of the constraint function $f_1(\bar
\theta)$.
\[\nabla f_1(\bar{\theta}) = \frac{1}{\sqrt{2\pi}}L^T \frac{v+L\bar{\theta}}{\norm{v+L\bar{\theta}}} \exp{\Bigl(-\frac{(e+f^T\bar{\theta})^2}{2\norm{v+L\bar{\theta}}^2}\Bigr)} + \frac{1}{2}\erfc{\Bigl(-\frac{e+f^T\bar{\theta}}{\sqrt{2}\norm{v+L\bar{\theta}}}\Bigr)}f\]
\\
\[\nabla^2 f_1(\bar{\theta}) = \frac{1}{\sqrt{2\pi}}\exp{\Bigl(-\frac{(e+f^T\bar{\theta})^2}{2\norm{v+L\bar{\theta}}^2}\Bigr)}\Bigl[J_1(\bar{\theta}) +J_2(\bar{\theta}) - J_3(\bar{\theta}) \Bigr]\]

where
\begin{align*}
J_1(\bar{\theta}) &= \frac{1}{\norm{v+L\bar{\theta}}}(L^TL+ff^T), \\
J_2(\bar{\theta}) &= \Bigl(\frac{(e+f^T\bar{\theta})^2 - \norm{v+L\bar{\theta}}^2}{\norm{v+L\bar{\theta}}^5}\Bigr) (L^T(v+L\bar{\theta})(v+L\bar{\theta})^TL), \\
J_3(\bar{\theta}) &= \Bigl(\frac{e+f^T\bar{\theta}}{\norm{v+L\bar{\theta}}^3}\Bigl) (L^T(v+L\bar{\theta})f^T + f(v+L\bar{\theta})^TL). \\
\end{align*}
The expressions of gradient and Hessian of the quadratic function
$f_0(\bar\theta)$, used e.g. by interior point method solvers, are
quite standard and will not be reported here.

\section{Simulation results}
\label{sec:num}

We illustrate some of our results with the help of a simple
example. Consider the mechanical system shown in
Figure~\ref{f:springs}.
\begin{figure}
{\small
\begin{center}
\begin{psfrags}
\psfrag{d1}[b]{$d_1$} \psfrag{d2}[b]{$d_2$} \psfrag{d3}[b]{$d_3$}
\psfrag{d4}[b]{$d_4$} \psfrag{m1}[]{$m_1$} \psfrag{m2}[]{$m_2$}
\psfrag{m3}[]{$m_3$} \psfrag{m4}[]{$m_4$} \psfrag{k1}[t]{$k_1$}
\psfrag{k2}[t]{$k_2$} \psfrag{k3}[t]{$k_3$} \psfrag{k4}[t]{$k_4$}
\psfrag{u1}[b]{$u_1$} \psfrag{u2}[b]{$u_2$} \psfrag{u3}[t]{$u_3$}
\includegraphics[width=.9\linewidth]{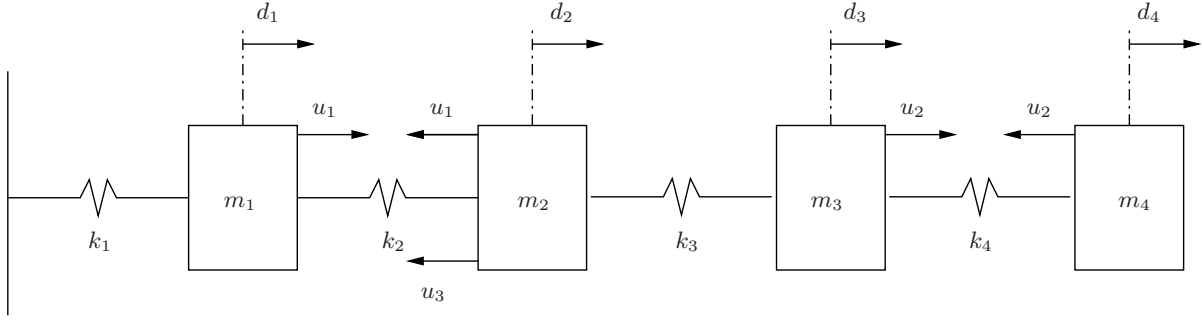}
\end{psfrags}
\end{center}
}
\caption{A mechanical system with springs and masses.}
\label{f:springs}
\end{figure}
$d_1, \cdots, d_4$ are displacements from an equilibrium position,
$u_1,\cdots, u_3$ are forces acting on the masses. In particular,
$u_1$ is a tension between the first and the second mass, $u_2$ is
a tension between the third and the fourth mass, and $u_3$ is a
force between the wall (at left) and the second mass. We assume
all mass and stiffness constants to be equal to unity, i.e. $m_1 =
\cdots = m_4 = 1, k_1 = \cdots = k_4 = 1$. We consider a
discrete-time model of this system with noise in the dynamics,
$$x(t+1) = A x(t) + B u(t) + w(t), $$
where $w$ is an i.i.d. noise process, $w(t) \sim
\mathcal{N}(0,\sigma^2I), \sigma = 0.05$ for all $t$, and
$x=\begin{bmatrix}d_1,d_2,d_3,d_4, \dot d_1, \dot d_2,\dot
d_3,\dot d_4
\end{bmatrix}^T$. The discrete-time dynamics are obtained
by uniform sampling of a continuous-time model at times $t\cdot
h$, with sampling time $h=1$ and $t=0,1,2,\ldots$, under the
assumption that the control action $u(t)$ is piecewise constant
over the sampling intervals $[t\cdot h,t\cdot h+h)$. Hence, $A =
e^{hA_c}$ and $B=A_c^{-1}(e^{hA_c}-I)B_c$, where $A_c$ and $B_c$,
defined as
\begin{align*}
A_c =&\,\, \begin{bmatrix}0_{4\times 4} & I_{4\times 4} \\ A_{c,21} & 0_{4 \times 4} \end{bmatrix}, &A_{c,21} =&\,\, \begin{bmatrix}-2& 1& 0& 0,\\
1& -2& 1& 0\\
0& 1& -2& 1\\
0& 0& 1& -1\\ \end{bmatrix},
\\
B_c = &\,\, \begin{bmatrix}0_{4\times 3} \\ B_{c,21} \end{bmatrix}, &B_{c,21}= &\,\,\begin{bmatrix} 1 & 0 & 0\\
-1& 0& -1\\
0& 1& 0\\
0& -1& 0\\  \end{bmatrix}
\end{align*}
are the state and input matrices of the standard ODE model of the
system.
\par
We are interested in computing the control policy that minimizes
the cost function
$$\EE\left[\sum_{t=0}^{N-1}\bigl(x(t)^TQx(t)+u(t)^TRu(t)\bigr)+x(N)^TQx(N)\right],
$$
where the horizon length is fixed to $N=5$, the weight matrices
are defined as $Q = \begin{bmatrix}I & 0 \\0 & 0
\end{bmatrix}$ (penalizing displacements but not their derivatives) and $R = I$.
The initial state is set to $x_0 = [0, 0, 0, 1, 0, 0, 0, 0]^T$. In
the absence of constraints, this is a finite horizon LQG problem
whose optimal solution is the linear time-varying feedback from
the state
$$u(t) =
-\big(B^TP(t+1)B+R\big)^{-1}B^TP(t+1)Ax(t),$$ where the matrices
$P(t)$ are computed by solving, for $t = N-1, \ldots, 0$, the
backward dynamic programming recursion
$$P(t)=Q+A^TP(t+1)A-A^TP(t+1)B(B^TP(t+1)B+R)^{-1}B^TP(t+1)A,$$ with $P(N)=Q$.
Simulated runs of the controlled system are shown in
Figure~\ref{f:LQG}.
\begin{figure}
\centering
\includegraphics[keepaspectratio=true,width=11cm]{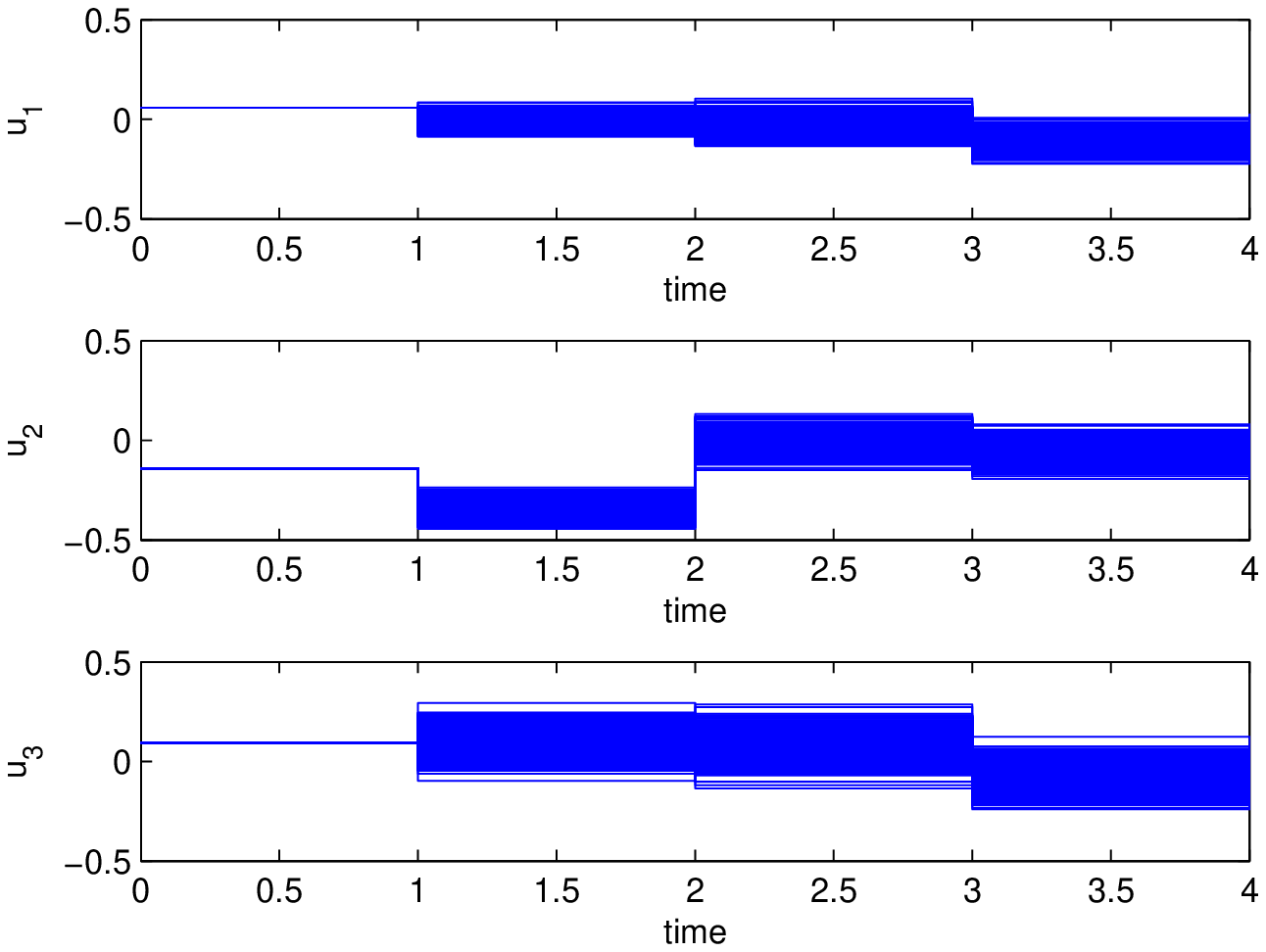}
\includegraphics[keepaspectratio=true,width=11cm]{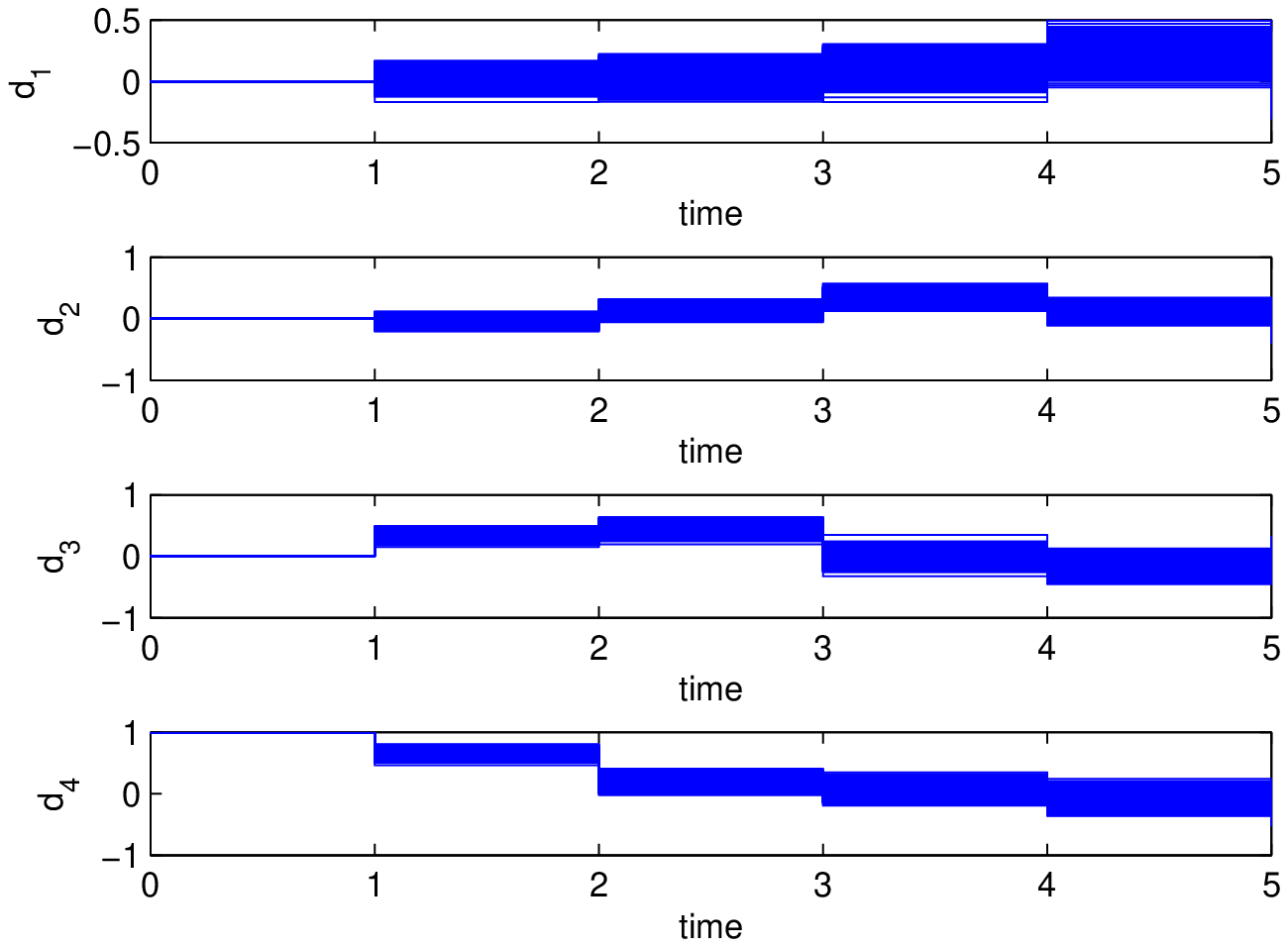}
\caption{1000 sample paths of the system with LQG control policy. Above: control input. Below: mass displacements.}\label{f:LQG}
\end{figure}
We shall now introduce constraints on the state and the control
input and study the feasibility of the problem with the methods of
Section~\ref{sec:cc}.
The convex approximations to the chance-constrained optimization
problems are solved numerically in Matlab by the toolbox
\texttt{CVX}~\cite{CVX}. In all cases we shall compute a $5$-stage
affine optimal control policy and apply it to repeated runs of the
system. Based on this we will discuss the feasibility of the hard
constrained problem and the probability of constraint violation.

\subsection{Polytopic constraints}

Let us impose bounds on the control inputs, $|u_1(t)|\leq 0.1$,
$|u_2(t)|\leq 0.3$ and $|u_3(t)| \leq 0.15$, with $t = 0, \ldots,
N-1$, and bounds on the mass displacements, $|d_i(t)| \leq 10$,
for $i=1,\cdots,4$ and with $t=1,\ldots, N$. In the notation of
Section~\ref{s:polyt}, these constraints are captured by the
equation $\eta(\bar x, \bar u) = T^x\bar x + T^u\bar u - y \leq 0$
where $T^x = \begin{bmatrix} M^T & 0\end{bmatrix}^T$ and $T^u =
\begin{bmatrix} 0 & H^T\end{bmatrix}^T$, with
$$M=\begin{bmatrix}M_1 & & \\ & \ddots &\\ & & M_1\end{bmatrix},\quad M_1 =  \begin{bmatrix} I_{4 \times 4} & 0_{4 \times 4} \\
-I_{4 \times 4} & 0_{4 \times 4} \end{bmatrix},\quad H=\begin{bmatrix}
I \\-I\end{bmatrix},$$ and
$$y=\begin{bmatrix} y_1 \\ y_2\end{bmatrix},\quad y_1 = \begin{bmatrix}
10 \\ \vdots \\ 10 \end{bmatrix},\quad y_2 =\begin{bmatrix}y'\\ \vdots
\\y' \end{bmatrix}, \quad y' = \begin{bmatrix}0.10 \\0.30 \\0.15
\end{bmatrix}.$$
This hard constraint is relaxed to the probabilistic constraint
$\PP[\eta(\bar x, \bar u)\leq 0]\geq 1-\alpha$. The resulting
optimal control problem is then addressed by constraint separation
~(Section~\ref{s:sep}) and ellipsoidal
approximation~(Section~\ref{s:confellipsoids}). \par
With
constraint separation, the problem is feasible for $\alpha\geq
0.05$. For $\alpha = 0.1$, the application of the suboptimal
control policy computed as in Proposition~\ref{prop:constrsep}
yields the results shown in Figure~\ref{f:constrsep}.
\begin{figure}
\centering
\includegraphics[keepaspectratio=true,width=11cm]{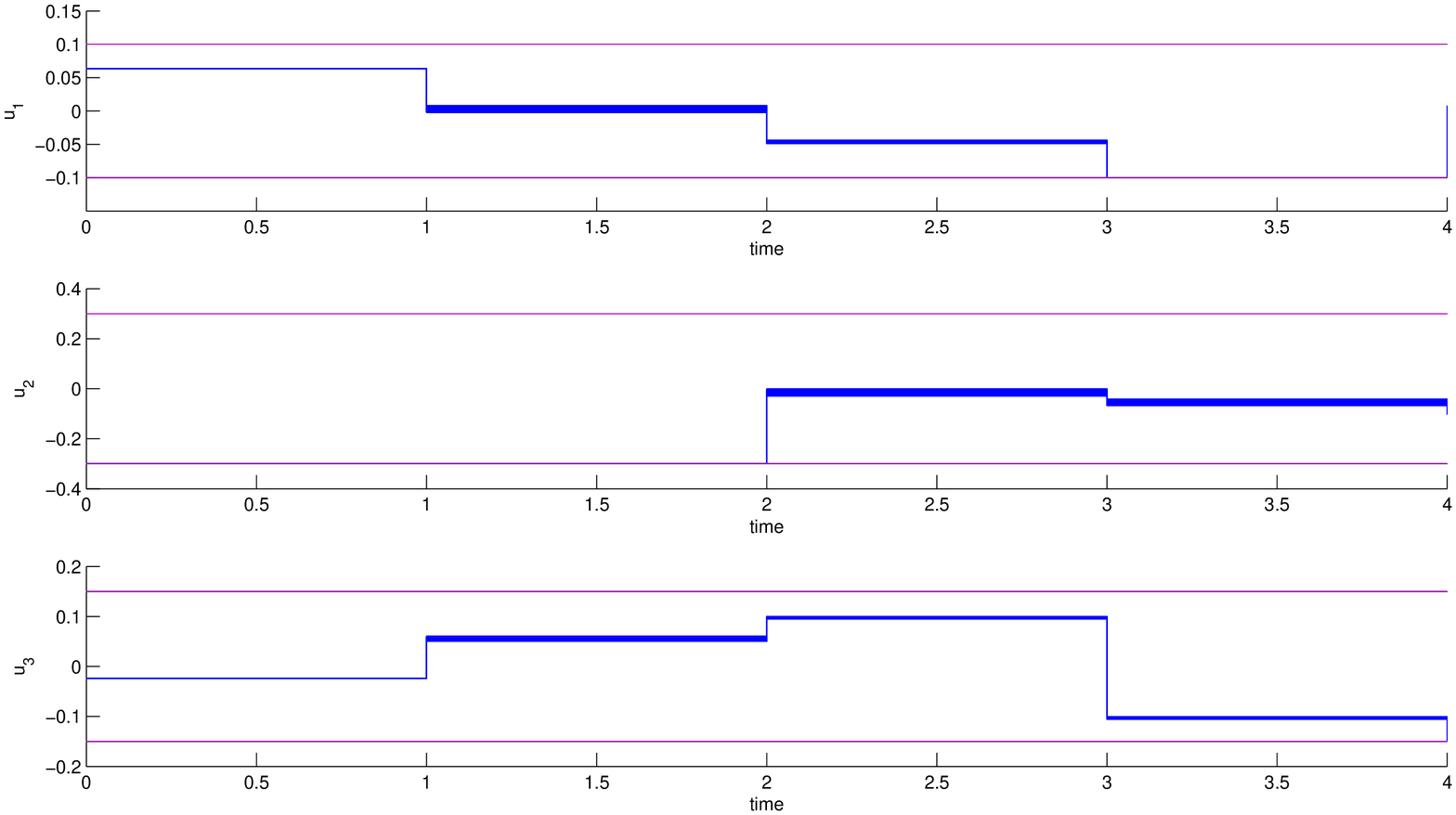}
\includegraphics[keepaspectratio=true,width=11cm]{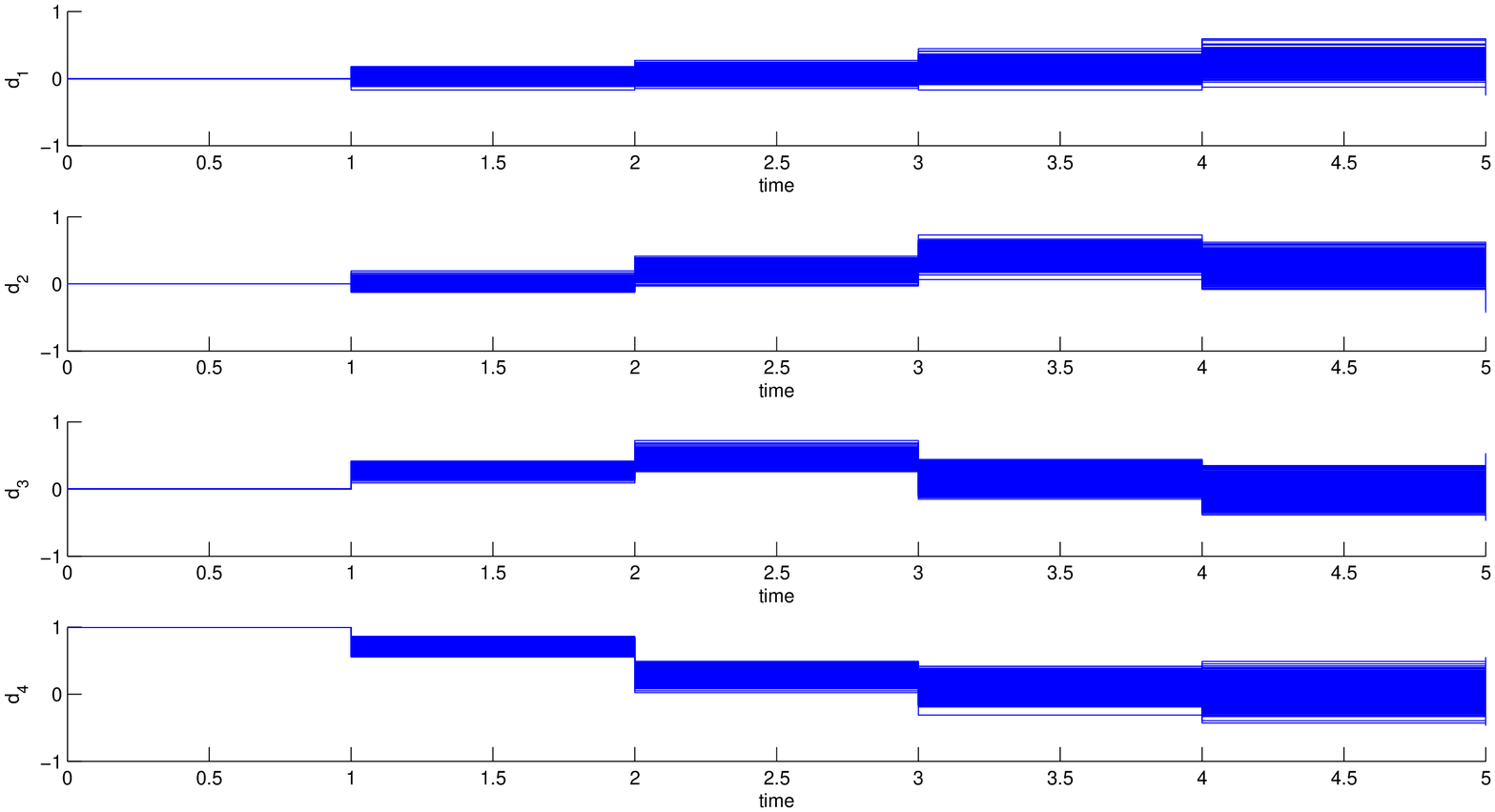}
\caption{1000 sample paths of the system with control policy computed
via constraint separation. Above: control input. Below: mass displacements. Horizontal straight lines show bounds.}\label{f:constrsep}
\end{figure}
With this policy, the control input saturates within the required
bounds whereas the mass displacements stay well below bounds. In
fact, although the required probability of constraint satisfaction
is $0.9$, constraints were never violated in 1000 simulation runs.
This suggests that the approximation incurred by constraint
separation is quite conservative, mainly due to the relatively
large number of constraints. It may also be noticed that the
variability of the applied control input is rather small. This
hints that the computed control policy is essentially open-loop,
i.e. the linear feedback gain is small compared to the affine
control term.
\par
With the ellipsoidal approximation method, for the same
probability level, the problem turns out to be infeasible, in
accordance with the conclusions of Section~\ref{s:comparison}. For
the sake of investigation, we loosened the bounds on the mass
displacements to $|d_i(t)| \leq 100$ for all $i$ and $t$. The
problem of Proposition~\ref{prop:confellip} is then feasible and
the results from simulation of the controlled system are reported
in Figure~\ref{f:confellip}.
\begin{figure}
\centering
\includegraphics[keepaspectratio=true,width=11cm]{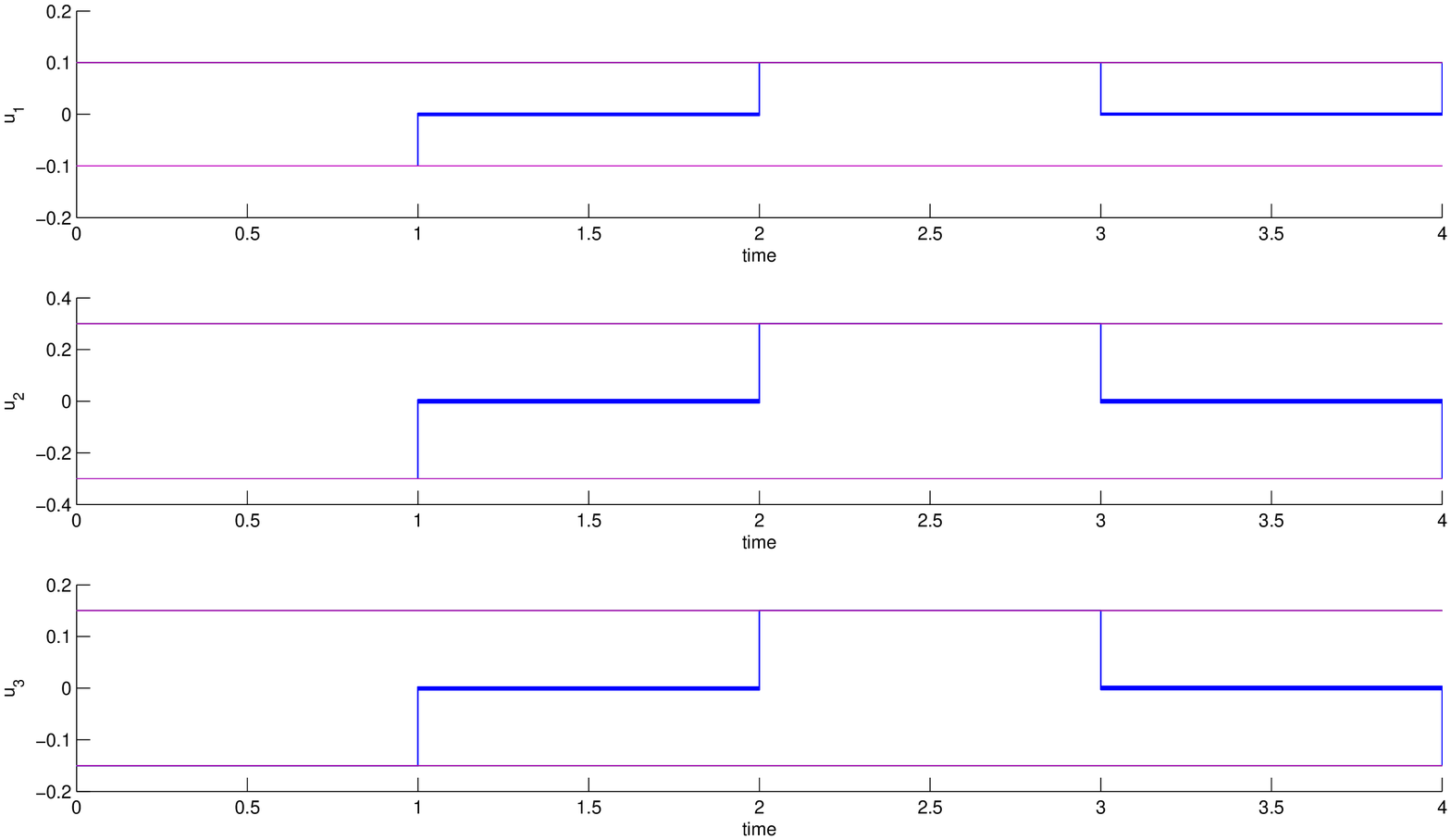}
\includegraphics[keepaspectratio=true,width=11cm]{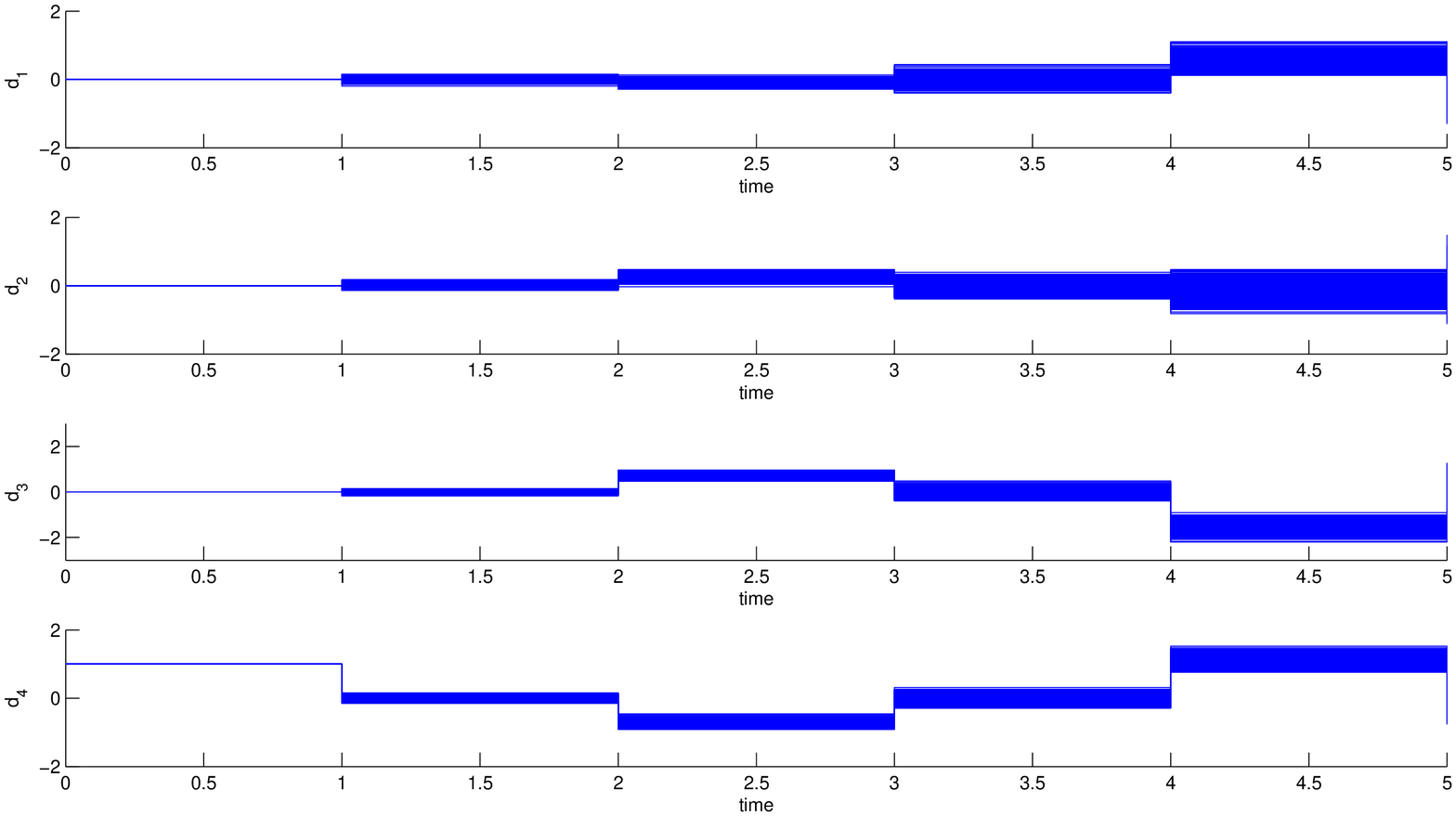}
\caption{1000 sample paths of the system with control policy computed
via ellipsoidal approximation. Above: control input. Below: mass displacements. Horizontal straight lines show bounds.}\label{f:confellip}
\end{figure}
Although the controller has been computed under much looser
bounds, the control performance is similar to the one obtained
with constraint separation, a clear sign that the ellipsoidal
approximation is overly conservative in this case. Another
evidence of inaccuracy is the fact that, while the control inputs
get closer to the bounds, the magnitude of the displacements is
not reduced. As in the case of constraint separation, the applied
control input is insensitive to the specific simulation run, i.e.
the control policy is essentially open loop.


\subsection{Ellipsoidal constraints}


Consider the constraint function
$$\sum_{k=1}^{N}\Bigl(d_1(k)^2+d_2(k)^2+d_3(k)^2+d_4(k)^2\Bigr)+\norm{\bar
u}^2\leq N\cdot c$$ with $c = (1^2
+1^2+1^2+1^2+0.1^2+0.15^2+0.3^2) = 4.1225$. Unlike the previous
section, we do not impose bounds on $d_i(k)$ and $u_i(k)$ at each
$k$ but instead require that the total ``spending'' on $x$ and $u$
does not exceed a total ``budget''. This constraint can be
modelled in the form of Section~\ref{sec:ellipconstr}, namely
$$\eta(\bar x, \bar u) = \Biggl(\begin{bmatrix} \bar x \\ \bar u
\end{bmatrix} -\delta \Biggr)^T \Xi \Biggl(\begin{bmatrix} \bar x
\\ \bar u
\end{bmatrix} -\delta \Biggr) -1\leq 0,$$
with $\delta=0$ and $\Xi=\frac{1}{N\cdot c}\begin{bmatrix}L & 0\\0
& I
\end{bmatrix}$, where
$$L=\begin{bmatrix}L_1 & & \\ & \ddots &\\ & & L_1\end{bmatrix},
L_1 =  \begin{bmatrix} I_{4 \times 4} & 0_{4 \times 4} \\ 0_{4
\times 4} & 0_{4 \times 4} \end{bmatrix}.$$ The constrained
control policy for $N=5$ and $\alpha=0.1$ is computed by solving
the LMI problem of Proposition~\ref{prop:ellipLMI}. Results from
simulations of the closed-loop system are reported in
Figure~\ref{f:ellipconstr}. Once again, constraints were not
violated over 1000 simulated runs, showing the conservatism of the
approximation. It is interesting to note that the displacements of
the masses are generally smaller than those obtained by the
controller computed under affine constraints, at the cost of a
slightly more expensive control action. In contrast with the
affine constraints case, the control action obtained here is much
more sensitive to the noise in the dynamics, i.e. the feedback
action is more pronounced.

\begin{figure}
\centering
\includegraphics[keepaspectratio=true,width=11cm]{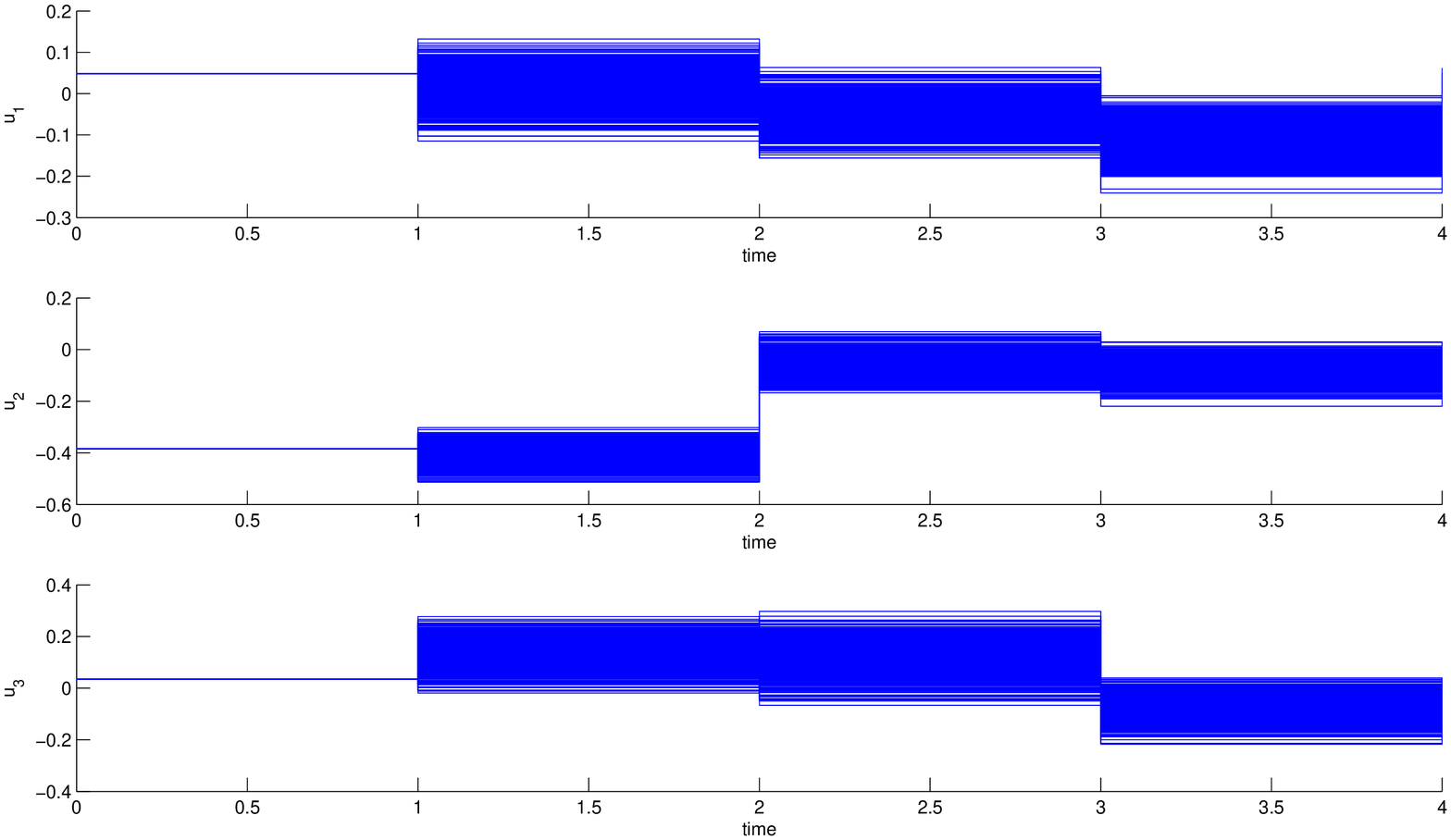}
\includegraphics[keepaspectratio=true,width=11cm]{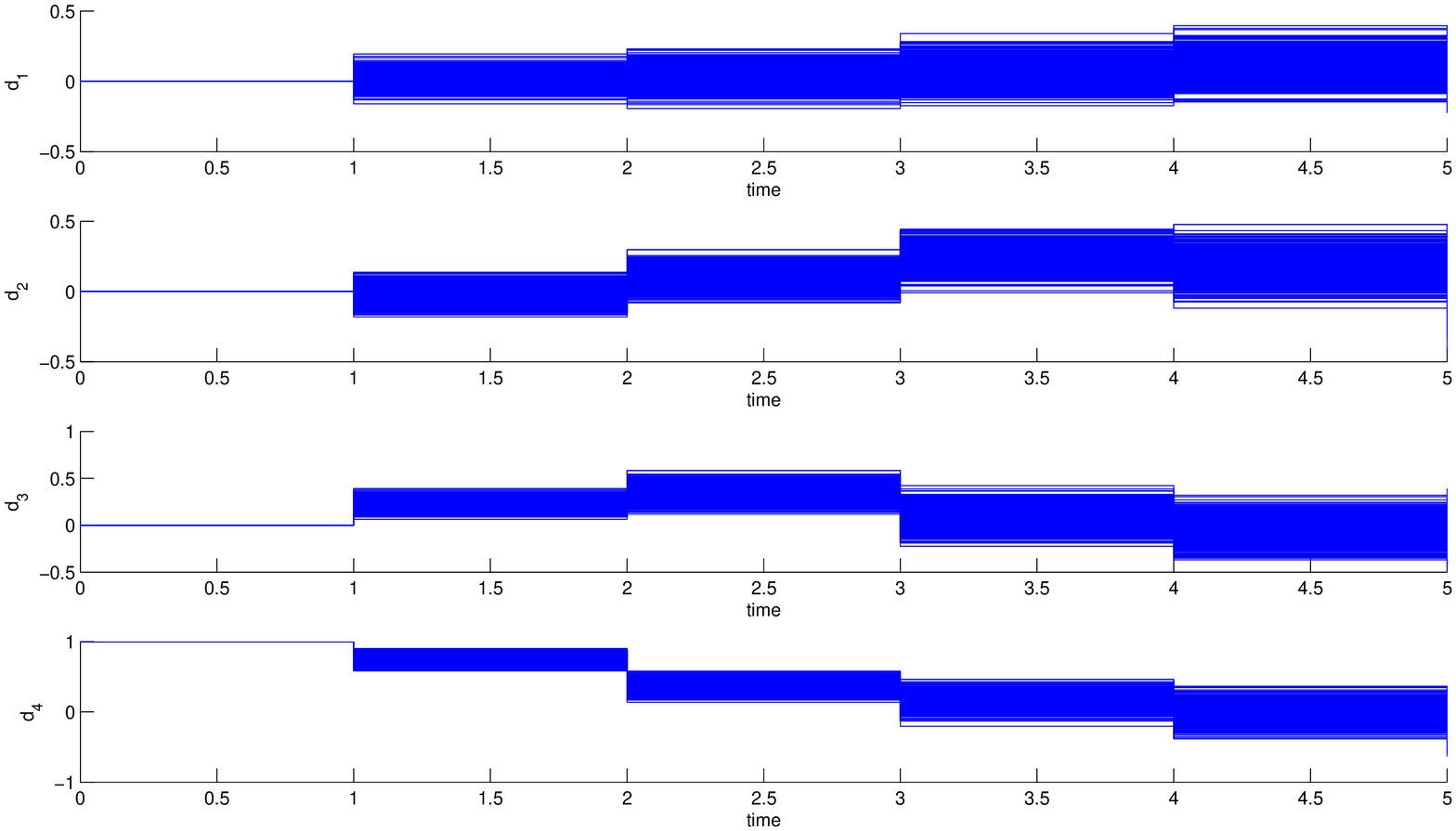}
\caption{1000 sample paths of the system with control policy computed
via ellipsoidal constraints. Above: control input. Below: mass displacements. Horizontal straight lines show bounds.}\label{f:ellipconstr}
\end{figure}

\section{Conclusions}
\label{section:conclusions}

We have studied the convexity of optimization problems with
probabilistic constraints arising in model predictive control of
stochastic dynamical systems. We have given conditions for the
convexity of expectation-type objective functions and constraints.
Convex approximations have been derived for nonconvex
probabilistic constraints. Results have been exemplified by a
numerical simulation study.\par Open issues that will be addressed
in the future are the role of the tunable parameters (e.g. the
$\alpha_i$ in Section~\ref{s:sep}, the $t_i$ of
Section~\ref{section:approxviaexp} and the $\beta_i$ in
Section~\ref{sec:icc}) in the various optimization problems, and
the effect of different choices of the ICC functions $\varphi_i$
(Section~\ref{sec:icc}). Directions of future research also
include the extension of the results presented here to the case of
noisy state measurements, the exact or approximate solution of the
stochastic optimization problems in terms of explicit control laws
and the control of stochastic systems with probabilistic
constraints on the state via bounded control laws.


\end{document}